	\definecolor{dkgreen}{rgb}{0,0.6,0}
	\definecolor{gray}{rgb}{0.5,0.5,0.5}
	\definecolor{mauve}{rgb}{0.58,0,0.82}
\tiny\color{gray},
\newcommand{\M}{\mathscr{M}}
\newcommand{\V}{\mathcal{V}}
\newcommand{\PP}{\mathbb{P}}
\newcommand{\C}{\mathbb{C}}
\DeclareMathOperator{\Grass}{Gr}
\DeclareMathOperator{\GL}{GL}
\DeclareMathOperator{\Var}{\mathcal V}
\DeclareMathOperator{\GC}{GC}
\DeclareMathOperator{\sign}{sign}
\theoremstyle{plain}
\newtheorem{thm}{Theorem}[subsection] % change to \newtheorem{thm}{Theorem}[section] if you want thm 2.3 rather than thm 2.2.1 etc. 
\newtheorem*{thm*}{Theorem}
\newtheorem{prop}[thm]{Proposition}
\newtheorem{lem}[thm]{Lemma}
\newtheorem{prob}[thm]{Problem}
\theoremstyle{definition}
\newtheorem{ex}[thm]{Example}
\newtheorem{rmk}[thm]{Remark}
\title{Geometric Equations for Matroid Varieties}
\author{Jessica Sidman}
\address{Mount Holyoke College, South Hadley, MA 01075}
\email{jsidman@mtholyoke.edu}
\author{Will Traves}
\address{United States Naval Academy, Annapolis, MD 21402}
\email{traves@usna.edu}
\author{Ashley Wheeler}
\address{Mount Holyoke College, South Hadley, MA 01075}
\email{awheeler@mtholyoke.edu}
\subjclass{14M15, 05B35}
\begin{document}

\maketitle

% % % % %
\begin{abstract}
	Each point $x$ in $\Grass(r,n)$ corresponds to an $r \times n$ matrix $A_x$ which gives rise to a matroid $\M_x$ on its columns.  Gel'fand, Goresky, MacPherson, and Serganova showed that the sets $\{y \in \Grass(r,n) | \M_y = \M_x\}$ form a stratification of $\Grass(r,n)$ with many beautiful properties.  However, results of Mn\"ev and Sturmfels show that these strata can be quite complicated, and in particular may have arbitrary singularities.  We study the ideals $I_x$ of matroid varieties, the Zariski closures of these strata.  We construct several classes of examples based on theorems from projective geometry and describe how the Grassmann-Cayley algebra may be used to derive non-trivial elements of $I_x$ geometrically when the combinatorics of the matroid is sufficiently rich.
\end{abstract}
\section{Introduction}
\label{sec:intro}

 Let $x \in \Grass(r,n)$ be a point in the Grassmannian of $r$-planes in $\C^n.$   Define a matroid $\M_x$ on the columns of $A_x$, an $r \times n$ matrix whose rows are a basis for the subspace corresponding to $x$. Gel'fand, Goresky, Macpherson, and Serganova \cite{ggms}  introduced the matroid stratification of the Grassmanian by sets of the form
\[
\Gamma_x = \{ y \in \Grass(r,n) \mid \M_y = \M_x\},
\] 
and gave beautiful connections to combinatorics.  We will study the ideals of the Zariski closures of these strata, the matroid varieties $\V_x=\overline{\Gamma_x}$.

The ideal $I_x = I(\V_x)$ lies in the homogeneous coordinate ring of $\Grass(r,n)$, which is a quotient of a polynomial ring in which the variables are Pl\"ucker coordinates.  The Pl\"ucker coordinates of $x$ correspond to $r\times r$ minors of $A_x,$ and if $\lambda$ is an ordered subset of $\Omega_n=\{1,\dots,n\}$ of size $r$, we define  $[\lambda]$ to be the determinant of the submatrix of $A_x$ obtained by selecting columns indexed by $\lambda$.  
   
   If the columns indexed by $\lambda$ fail to be a basis of $\M_x$, then certainly $[\lambda] \in I_x,$ and we define the ideal 
\[
N_x = \langle [\lambda]  \mid \lambda \in\textstyle\binom{\Omega_n}{r}, \ \lambda \text{ is not a basis of $\M_x$} \rangle\subseteq I_x.
\] 
Note that the quotient ring $\C[\Grass(r,n)]/N_x$ is isomorphic to the ring  $B_{\M_x}$ defined by White in \cites{whiteBracketsI, whiteBracketsII}.  Knutson, Lam, and Speyer \cite{kls} show that the two ideals $N_x$ and $I_x$ are equal when $\M_x$ is a positroid.  However, Sturmfels \cite{sturmfels} produced an example of a representable matroid where $N_x$ is strictly smaller than $I_x$.  We say that an element of the coordinate ring of the Grassmannian is \emph{nontrivial} for the matroid $\M_x$ if it lies in the ideal $I_x$ but not in $N_x$. 

It seems beyond reach to hope to find explicit generators for $I_x$ for a general $x$.  Results of Mn\"ev \cite{mnev} and Sturmfels \cite{sturmfelsStrata} show that arbitrary singularities defined over the rationals may be found in matroid varieties.  Moreover, as Knutson, Lam, and Speyer \cite{kls} note, it is not even known if $\V_x$ is irreducible or equidimensional, and they refer to questions involving the strata as having ``paved the road to Hell'' leading into an ``abyss.''  Indeed, it can be devilishly difficult to compute the ideal of a matroid variety. 

Our results point to an interesting middle ground between positroids and matroids. We use results in classical incidence geometry, such as Pascal's Theorem, to produce nontrivial elements in the ideal $I_x$ of $\V_x$. The Grassmann-Cayley algebra, described in Section  \ref{sec: GC}, provides an algebraic language to state and prove results in incidence geometry. Section \ref{sec:pascal} is devoted to producing nontrivial polynomials in the matroid variety associated to Pascal's Theorem. Our core result, Theorem \ref{thm: pascalQuartic}, is both a prototype upon which later results are modeled and an ingredient in later proofs. In Section \ref{sec: generalizations} we state three generalizations of this result. In Theorem \ref{thm: morePoints} we define a matroid variety on an arbitrary number of points on a conic and apply Pascal's Theorem repeatedly to construct several nontrivial quartics in $I_x.$  In a different direction, in Theorem \ref{thm:pascalConfigGen} we use Caminata and Schaffler's recent results characterizing sets of points on a rational normal curve \cite{caminataSchaffler}  to provide examples in higher dimensions.   Finally, in Theorem \ref{thm: pascalCB}, we observe that Pascal's Theorem itself is a special case of a phenomenon arising from the Cayley-Bacharach Theorem \cite{eisenbud+green+harris} and use this result to produce an infinite set of examples in the plane where $N_x \subsetneq I_x$.

% % % % %
\section{Matroid varieties and the Grassmann-Cayley algebra}
\label{sec:2}
In \S \ref{sec: matroidVars} we introduce matroid varieties and their defining ideals. In \S\ref{sec: GC} we give a brief introduction to the Grassmann-Cayley algebra, which provides a language for translating synthetic geometric constructions into algebra.  For a more comprehensive introduction to the Grassmann-Cayley algebra see \cites{richter-Gebert,sturmfels,st}.
% % %
\subsection{Matroid varieties}
\label{sec: matroidVars}

Recall that a \emph{matroid} $\M$ may be specified by giving a finite ground set $\Omega_n$ and a nonempty collection $\mathscr B$ of subsets of $\Omega_n$, satisfying the \emph{exchange axiom}: 	If $B,B'\in\mathscr B$ and $\beta\in B\setminus B'$, then there exists $\beta'\in B'\setminus B$ such that $(B\setminus \{\beta\})\cup\{\beta'\}\in\mathscr B$.
The sets in $\mathscr B$ are called \emph{bases}, and it follows from the definitions that all of the elements of $\mathscr B$ have the same cardinality, which we refer to as the \emph{rank} of $\M.$

\begin{ex}
	\label{ex:matrixMatroid}
Fix $r\leq n$ and let $A$ denote an $r\times n$ matrix.  Then the indices of the columns of $A$ form the 
ground set $\Omega_n$ of a matroid $\M$.  The bases of $\M$ are sets of indices of columns that form a basis 
for the column space of $A$.  If the columns of $A$ 
correspond to $n$ points in $\mathbb{C}^r$ in general position, this construction produces the 
\emph{uniform matroid of rank $r$} whose bases are the $r$-element subsets of $\Omega_n.$
\end{ex}

If $x \in \Grass(r,n)$ let $A_x$ be any $r\times n$ matrix whose rows are a basis for the subspace corresponding to $x.$  The matrix $A_x$ is only well-defined up to left multiplication by an element of $\GL_r,$ which performs an invertible linear combination of the rows.  This changes the column space of $A_x$, but preserves the subsets of columns of $A_x$ that are linearly independent,  so the matroid on the columns of $A_x$ is invariant. We let $\M_x$ denote the matroid on the columns of $A_x$ as in Example \ref{ex:matrixMatroid}.  The set $\Gamma_x$ contains all points $y \in \Grass(r,n)$ for which $\M_x = \M_y,$ and the \emph{matroid variety} $\V_x$ is the closure $\V_x = \overline{\Gamma_x} \subset \Grass(r,n)$ in the Zariski topology.

\begin{ex}[Sturmfels \cite{sturmfelsStrata}, Ford \cite{ford}]
	\label{ex:pencil}
	Suppose $P_1,\dots, P_7$ are points in the projective plane $\PP^2$ such that the lines $\overline{P_1P_2},\overline{P_3P_4},\overline{P_5P_6}$ meet in the point $P_7$ and no three of the points $P_1, \ldots, P_6$ are collinear, as depicted in Figure \ref{fig:pencil}. 
		\begin{figure}[h!t]
		\includegraphics[scale=0.225]{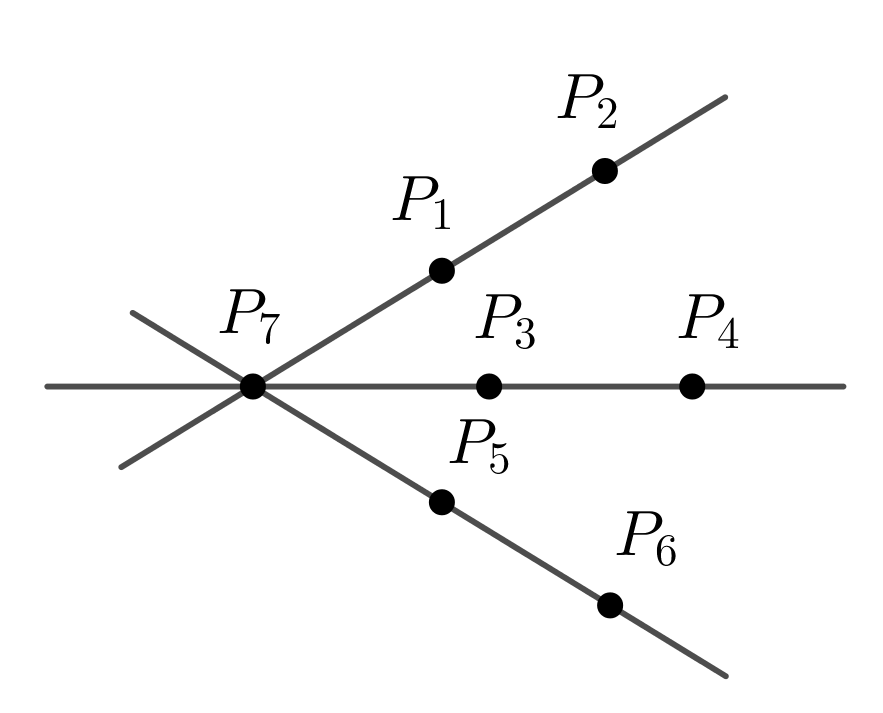}
		\caption{A pencil of lines meeting in a marked point.}
		\label{fig:pencil}
		\end{figure}
	Fix homogeneous coordinates for the seven points and write them as the columns of a $3\times 7$ matrix $A_x$.  Let $x\in\Grass(3,7)$ be the row span of $A_x$ and let $\M_x$ be the matroid associated to $x$.  The bases for $\M_x$ consist of all subsets of $\Omega_7$ of size 3, except for $\{1,2,7\}$, $\{3,4,7\}$, and $\{5,6,7\}$.
\end{ex}

We introduce notation to describe the ideal of a matroid variety in terms of brackets.  Let $m = \binom{n}{r}-1.$  If $x \in \Grass(r,n)$ then the $r \times r$ minors of $A_x$ are the Pl\"ucker coordinates of $x$ in the embedding $\Grass(r,n) \hookrightarrow \PP^m.$  Let $\Lambda = \Lambda(\Omega_n,r)$ be the collection of all $r$-element subsets $\lambda$ of $\Omega_n$.  We use the bracket $[\lambda_1 \cdots \lambda_r]$ to denote the coordinate corresponding to the $r \times r$ minor of $A_x$ whose columns are indexed by $\lambda_1 < \cdots <\lambda_r.$  We will abuse notation and sometimes write the elements in a bracket out of order, which is equivalent to a bracket with the usual ordering up to a sign given by the appropriate permutation of the elements of $\lambda.$  e.g., $[132] =-[123].$  We denote the homogeneous coordinate ring of $\PP^m$ in bracket coordinates by $\C[\Lambda]$ and let
$I_{r,n}$ denote the ideal of Pl\"ucker relations for $\Grass(r,n)$ so that $\C[\Grass(r,n)] = \C[\Lambda]/I_{r,n}$. Sturmfels \cite{sturmfels}*{Theorem 3.1.7} gives an explicit Gr\"obner basis for $I_{r,n}$. In particular, for any distinct $\lambda_1,\dots,\lambda_5\in\Omega_n$,
\begin{equation} 
\label{GPrel} 
[\lambda_1 \lambda_2 \lambda_3][\lambda_1 \lambda_4 \lambda_5] - [\lambda_1 \lambda_2 \lambda_4][\lambda_1 \lambda_3 \lambda_5] + [\lambda_1 \lambda_2 \lambda_5][\lambda_1 \lambda_3 \lambda_4] \in I_{3,n}. 
\end{equation} 
See Richter-Gebert \cite{richter-Gebert}*{Theorem 6.3} for a derivation of this \emph{Grassmann-Pl\"ucker relation} using Cramer's rule.   

The ideal $N_x = \langle [\lambda] \mid \lambda \textrm{ is not a basis for } \M_x\rangle \subset \C[\Grass(r,n)]$ is the ideal generated by brackets corresponding to non-bases of $\M_x.$ Let $J_x% = \langle [\lambda] \mid \lambda \textrm{ is a basis for } \M_x \rangle 
\subset \C[\Grass(r,n)]$ be the ideal generated 
by the product of the brackets corresponding to bases of $\M_x.$ We let $I_x\subset\C[\Grass(r,n)]$ denote the ideal of $\V_x= \overline{\Gamma_x}.$

We can use $N_x$ and $J_x$ to get information about $I_x$. Recall that the \emph{saturation} of an ideal $I$ contained in a ring $R$ by an ideal $J\subset R$ is $$I:J^{\infty} \stackrel{\text{def}}{=}  \langle f \in R: \text{for each } g \in J, \; g^kf \in I \text{ for some } k\in \mathbb{Z}_+ \rangle.$$ 

\begin{prop} We have the ideal containments $(N_x : J_x^{\infty}) \subseteq (I_x:J_x^\infty) = I_x$, and $\sqrt{(N_x : J_x^{\infty}) } = I_x.$
\end{prop}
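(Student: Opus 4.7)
The plan is to identify the stratum $\Gamma_x$ set-theoretically with the locally closed subset $V(N_x)\setminus V(J_x)$ of $\Grass(r,n)$, and then apply the standard dictionary between ideal saturation and Zariski closure.

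The first step is to verify $\Gamma_x = V(N_x)\setminus V(J_x)$. By the definition of $\M_y$, a point $y\in\Grass(r,n)$ lies in $\Gamma_x$ exactly when $[\lambda](y)=0$ for every non-basis $\lambda$ of $\M_x$ and $[\lambda](y)\neq 0$ for every basis $\lambda$ of $\M_x$. The first family of conditions cuts out $V(N_x)$, and, since $J_x$ is generated by the \emph{single} polynomial $\prod_{\lambda\in\mathscr B}[\lambda]$, the second family is equivalent to $y\notin V(J_x)$. Because $N_x\subseteq I_x$ forces $\V_x\subseteq V(N_x)$, one also obtains the refinement $\Gamma_x = \V_x\setminus V(J_x)$.

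Next I would invoke the geometric description of saturation: for any ideals $K,L$ in the homogeneous coordinate ring of $\Grass(r,n)$, one has $\sqrt{K:L^\infty} = I\bigl(\overline{V(K)\setminus V(L)}\bigr)$, and if $K$ is already radical then $K:L^\infty$ is radical. Applying this with $K=I_x$ and $L=J_x$, and using $\V_x\setminus V(J_x)=\Gamma_x$, one obtains
\[
I_x : J_x^\infty \;=\; I\bigl(\overline{\Gamma_x}\bigr) \;=\; I(\V_x) \;=\; I_x.
\]
Applying the same principle with $K=N_x$ and $L=J_x$, and using $V(N_x)\setminus V(J_x)=\Gamma_x$, one obtains
\[
\sqrt{N_x : J_x^\infty} \;=\; I\bigl(\overline{\Gamma_x}\bigr) \;=\; I(\V_x) \;=\; I_x.
\]
The containment $(N_x : J_x^\infty)\subseteq (I_x : J_x^\infty)$ is immediate from $N_x\subseteq I_x$ by monotonicity of saturation, which combined with the first display yields $(N_x : J_x^\infty)\subseteq I_x$ and completes all the assertions in the proposition.

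I do not anticipate any serious obstacle beyond the initial set-theoretic identification of $\Gamma_x$: once that is in place, the two displayed identities are formal consequences of the standard correspondence between saturation and the closure of a complement. The only subtlety is noticing that $J_x$ is principal, so that $V(J_x)$ is defined by the vanishing of \emph{some} basis bracket rather than of all of them, which is exactly what is needed to recover $\Gamma_x$ as $V(N_x)\setminus V(J_x)$ rather than merely as a subset of it.
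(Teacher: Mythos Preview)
Your proposal is correct and follows essentially the same route as the paper: both hinge on the identification $\Gamma_x = V(N_x)\setminus V(J_x)$ (using that $J_x$ is principal) and then pass to closures via saturation. The only minor difference is packaging: the paper proves $(I_x:J_x^\infty)\subseteq I_x$ by a direct vanishing argument on $\Gamma_x$ (if $g^kf\in I_x$ and $g$ is nonzero on $\Gamma_x$, then $f$ vanishes on $\Gamma_x$, hence $f\in I_x$), whereas you invoke the general fact that saturating a radical ideal yields a radical ideal and then apply the closure-of-complement formula with $K=I_x$.
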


\begin{proof} The first containment follows from $N_x \subseteq I_x$ and the containment $I_x \subseteq (I_x:J_x^\infty)$ is trivial so we concentrate on showing $(I_x:J_x^\infty) \subseteq I_x$. 
Let $g$ denote the generator of $J_x$. To show the second containment it is enough to show that if $g^kf \in I_x$ then $f \in I_x$. Assume $g^kf \in I_x$. Then for all $y \in {\Gamma}_x$, $(g^kf)(y) = \left( g(y) \right)^k f(y) = 0$, but $g$ does not vanish on ${\Gamma}_x$ so $f(y) = 0$. Since $f$ vanishes on ${\Gamma}_x$, $f \in I({\Gamma}_x) = I_x$.

Now we show that $\V(N_x: J_x^{\infty}) = \V(I_x)$. The variety $\V(N_x:J_x^\infty)$ is the Zariski-closure of $\V(N_x) \setminus \V(J_x).$  The set difference $\V(N_x) \setminus \V(J_x)$ is the set of points $y$ on which the brackets $[\lambda]$ corresponding to non-bases $\lambda$ in $\M_x$ vanish but none of the brackets $[\lambda]$ corresponding to bases in $\M_x$ vanish, i.e., $\V(N_x) \setminus \V(J_x) = \Gamma_x.$ Therefore, $\V(N_x: J_x^\infty) = \overline{\Gamma_x} = \V_x = \V(I_x)$. Now by Hilbert's Nullstellensatz, the radical $$\sqrt{N_x: J_x^\infty} = \{ f \in \C[\Grass(r,n)]: 
f^t \in (N_x:J_x^\infty) \text{ for some } t \in \mathbb{Z}_+ \}$$ equals $I_x$. 
 \end{proof}

We highlight two hard problems concerning saturating by $J_x$.

\begin{prob}
When is $I_x=(N_x:J_x^{\infty})$? That is, when is the saturation $(N_x:J_x^\infty)$ radical?
\end{prob}

Part of the difficulty in this problem is that it is very hard to explicitly compute the Zariski closure of sets in the Grassmannian. The problem has a straightforward answer when $N_x$ is radical. In that case, $(N_x:J_x^\infty)$ is also radical: if $g$ is the generator of $J_x$ and $f^n \in (N_x:J_x^\infty)$ then $f^n$ multiplies some power $g^k$ into $N_x$ but then $(fg)^{k+n} \in N_x$, so $fg \in N_x$ (as $N_x$ is radical), and thus $f \in (N_x:J_x^\infty)$.
%Indeed, we saw an example of an unusual point $y$ in $\overline{\Gamma_x}$ in Example \ref{ex: pencilIdeal}.  
White conjectured (Conjecture 6.8B in \cite{whiteBracketsII}) that $N_x$ is radical if $\M_x$ is a unimodular combinatorial geometry; that is, $N_x$ is radical if $\M_x$ can be represented by a matrix $A_x$ so that no column of $A_x$ is a scalar multiple of another and all the minors of $A_x$ are -1, 0, or 1.  However, it is a difficult problem to completely characterize when the ideal $N_x$ is radical. 
% Combinatorial geometry means that we have a matroid in which every subset of size at most 2 is independent, so no vector is a scalar multiple of another (and in particular, no zero vectors are allowed). Unimodular means that the matrix is totally unimodular, i.e. all its minors are 0, 1 or -1. 

\begin{prob}
Bound the degree of the generators of the saturation $(N_x:J_x^{\infty})$ in terms of the combinatorics of the matroid $\M_x$. 
\end{prob}
 
The saturation of $N_x$ by $J_x$ may produce nontrivial elements of $I_x$.
\begin{ex}
	\label{ex: pencilIdeal} 
Let $\V_x\subset \Grass(3,7)$ denote the variety associated to the matroid $\M_x$ in Example \ref{ex:pencil}.  
Since the sets $\{1,2,7\}, \{3,4,7\}$, and $\{5,6,7\}$ are not bases, the brackets $[127], [347],$ and $[567]$ 
generate the ideal $N_x.$ 
Note that if $[\lambda_1 \lambda_2 \lambda_3] \in N_x$ then Equation (\ref{GPrel}) implies 
that $ [\lambda_1 \lambda_2 \lambda_5][\lambda_1 \lambda_3 \lambda_4] - [\lambda_1 \lambda_2 \lambda_4]\
[\lambda_1 \lambda_3 \lambda_5] \in N_x$. Now $$ \begin{array}{llll}  {[127]} \in N_x & \Rightarrow & a-b 
& \stackrel{\text{def}}{=} {[123][247] - [124][237]} \in N_x \\
{[347]} \in N_x & \Rightarrow & c-d & \stackrel{\text{def}}{=} {[237][467] - [367][247]} \in N_x \\
{[567]} \in N_x & \Rightarrow & e-f & \stackrel{\text{def}}{=} {[456][367] - [356][467]} \in N_x, 
\end{array}$$ so $$ [237][367][247][467]\left([123][456]-[124][356]\right) = (a-b)ce + (c-d)be + (e-f)bd$$ 
lies in $N_x.$ Now the quadratic $F = [123][456]-[124][356]$ lies in $(N_x: J^\infty) \subseteq (I_x: J^\infty) = 
I_x$. In the next section we use the Grassmann-Cayley algebra to give another proof that $F$ is in $I_x$.

We now show that $F$ is indeed nontrivial. To see that $F \notin N_x$, choose six points in linearly general position in $\PP^2$, with the property that they cannot be partitioned into three pairs spanning coincident lines. 
Define $y \in \Grass(3,7)$ by constructing a matrix $A_y$ in which the first six columns are homogeneous coordinates of these points and the last column is zero.  Because all brackets generating $N_x$ involve the seventh column of $A_y$, we have that $y$ is in the algebraic set $\Var(N_x)$.  However, by construction, $y$ does not satisfy the polynomial $F$.  Now the following remark shows that $F\notin N_x$.  

\begin{rmk}
	\label{rmk: fnotinNx}
	If $y \in \Var(N_x)$ with $F(y) \neq 0$, then $F \not\in I(\Var(N_{x})) = \sqrt{N_{x}} \supset N_{x}$, so $F \not\in N_x$. This argument will be used throughout the paper to show that an element of $I_x$ is nontrivial.       
\end{rmk}

In fact, Ford \cite{ford} claimed that $I_x = N_x+\langle F \rangle$. We give a computational argument to verify Ford's description of the ideal $I_x$.  

\begin{thm}\label{thm:Ford} Using the notation of Example \ref{ex: pencilIdeal}, we have $I_x = N_x + \langle F \rangle.$
\end{thm}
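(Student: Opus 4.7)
The containment $N_x + \langle F \rangle \subseteq I_x$ is already established in Example~\ref{ex: pencilIdeal}, so the task is to prove $I_x \subseteq N_x + \langle F \rangle$. The plan is to reduce this to two checkable algebraic assertions about $K := N_x + \langle F \rangle$ and then verify them with computer algebra.

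The first observation is that $\V_x$ is irreducible, so $I_x$ is a prime ideal. Indeed, $\Gamma_x$ admits a rational parametrization: pick a point $P_7 \in \PP^2$, pick three distinct lines through $P_7$, and pick two points on each line avoiding $P_7$; the conditions that no unintended collinearities or coincidences occur are Zariski-open. Thus $\Gamma_x$ is an open subset of an irreducible rational variety, so $\V_x = \overline{\Gamma_x}$ is irreducible and $I_x$ is prime.

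Given primeness of $I_x$, it suffices to prove that
\begin{enumerate}
\item $K$ is a prime ideal in $\C[\Grass(3,7)]$, and
\item $\dim \C[\Grass(3,7)]/K = \dim \V_x$.
\end{enumerate}
Granted these, $\V(K)$ is irreducible and contains $\Gamma_x$ (because $N_x$ and $F$ both vanish on $\Gamma_x$), so by the equality of dimensions $\V(K) = \overline{\Gamma_x} = \V_x$. Since $K$ is prime, this forces $K = I(\V(K)) = I(\V_x) = I_x$.

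Both (i) and (ii) are then straightforward to verify in a system such as Macaulay2: build $\C[\Grass(3,7)] = \C[\Lambda]/I_{3,7}$ using the Pl\"ucker relations, input the three generators of $N_x$ together with the quartic $F$, form $K$, and invoke primality and dimension routines. The main obstacle is computational scale, since the bracket ring has $35$ variables modulo the many Pl\"ucker relations and global ideal operations on $\Grass(3,7)$ can be heavy. A standard workaround is to pass to an affine chart on $\Grass(3,7)$, say by normalizing a generic $3\times 3$ submatrix of $A_x$ to the identity; the ideals then sit in a polynomial ring in the remaining $12$ matrix entries, where primality and dimension are tractable, and both properties transfer back to the projective setting to complete the proof.
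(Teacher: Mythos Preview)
Your strategy is viable but different from the paper's. The paper never tests $K=N_x+\langle F\rangle$ for primality; instead it computes the saturation $(N_x:J_x^\infty)$ directly in Macaulay2 and finds it equal to $K$, then shows that $N_x$ is radical by exhibiting a squarefree initial ideal, whence $(N_x:J_x^\infty)=K$ is radical as well, and the identity $\sqrt{(N_x:J_x^\infty)}=I_x$ established earlier finishes the argument. Your route trades the saturation computation for a primality check on $K$ together with a dimension comparison; the irreducibility of $\V_x$ via an explicit rational parametrization is a clean geometric input that the paper only records as a remark after the proof.

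Two points in your write-up need tightening. First, condition~(ii) requires knowing $\dim\V_x$ independently of $I_x$, since you have no presentation of $I_x$ to hand to a dimension routine; your parametrization does determine the answer (a parameter count gives $\dim\V_x=9$), but you should carry that count out explicitly rather than defer it to Macaulay2. Second, checking primality of $K$ on a single affine chart does not by itself give global primality: primeness of the restriction to the chart $[123]\neq 0$ only controls associated primes of $K$ not containing $[123]$, and you must also verify $(K:[123]^\infty)=K$ to exclude components or embedded primes supported on $[123]=0$ before transferring back. A small slip: $F=[123][456]-[124][356]$ is quadratic in the brackets, not quartic.
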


\begin{proof}
In what follows, all of the ideals have generators with rational coefficients.  Therefore, the computations performed in Macaulay2 \cite{M2} using Gr\"obner methods produce the same results over $\mathbb{Q}$ or $\C.$    We found that the saturation of $\langle [127],[347],[567] \rangle + I_{3,7}$ in $\mathbb{Q}[\Lambda]$ by the product of the brackets corresponding to bases is precisely $\langle [127],[347],[567], F \rangle + I_{3,7}$, so the saturation $(N_x: J_x^\infty)$ in the quotient ring $\C[\Grass(3,7)]$ is $N_x + \langle F \rangle$. We also computed the initial ideal of $\langle [127],[347],[567] \rangle + I_{3,7}$ in $\mathbb{Q}[\Lambda]$. That initial ideal is a squarefree monomial ideal so both the initial ideal and the original ideal are radical ideals. Taking the quotient by $I_{3,7}$, we see that $N_x$ is a radical ideal in $\mathbb{C}[\Grass(3,7)]$. Then $(N_x:J_x^\infty)$ is also radical and 
$I_x = \sqrt{(N_x:J_x^\infty)} = (N_x:J_x^\infty) = N_x + \langle F \rangle$. 
\end{proof}

\end{ex}

\begin{rmk}
	In fact, localizing the coordinate ring of $\V_x$ at brackets corresponding to bases we get a ring isomorphic to a polynomial ring in nine variables localized at three variables, showing that $\V_x$ is irreducible and $I_x$ is prime. 
\end{rmk}

% % % 
\subsection{The Grassmann-Cayley algebra}
\label{sec: GC}

The \emph{Grassmann-Cayley algebra} $\GC(\C^r)$ is the usual exterior algebra over the vector space $\C^r$ equipped with two operations, the join and the meet. Working with $\GC(\C^r)$  allows us to construct polynomials in the coordinate ring of $\Grass(r,n)$ using geometry.

The \emph{join} operation is the exterior product, though the symbol $\vee$ is used for the join in the Grassmann-Cayley algebra rather than the symbol $\wedge$.  The join of $k$ vectors in $\C^r$ is an \emph{extensor} of \emph{step} $k$ and is non-zero if and only if the vectors are linearly independent.  We will often suppress notation and denote the join via juxtaposition.  If $\{e_1, \ldots, e_r\}$ is the standard basis for $\C^r$, then $v_1 \vee \cdots \vee v_r = [v_1 \cdots v_r]e_1 \cdots e_r$, and we identify an extensor of step $r$ with a bracket.

 Given two extensors $v=v_1\cdots v_k$, $w=w_1\cdots w_\ell$, when $k+\ell < r$ the \emph{meet} $\wedge$  is defined to be 0; if $k+\ell \geq r$, the meet is defined to be
\[
v\wedge w = \sum_{\sigma\in\mathscr S(k,\ell,r)}\sign(\sigma)[v_{\sigma(1)}\cdots v_{\sigma(r-\ell)}w_1\cdots w_\ell]\cdot v_{\sigma(r-\ell+1)}\cdots v_{\sigma(k)}, 
\]
where $\mathscr S(k,\ell,r)$ is the set of all permutations $\sigma$ of $\Omega_k$ so that $\sigma(1) < \cdots < \sigma(r-\ell)$ and $\sigma(r-\ell+1) < \cdots < \sigma(k)$. The meet of two extensors is again an extensor (though this is not obvious), and the vectors in this extensor form a basis for the intersection of the two subspaces whose bases are given by the two extensors in the meet. 

While Grassmann-Cayley expressions whose step is a multiple of $r$ can be written in terms of brackets, a polynomial in the bracket algebra may or may not have a \emph{Cayley factorization} into a Grassmann-Cayley expression involving only meets and joins.   

\begin{ex}
\label{ex: pencilEq} We use geometry to recover the nontrivial polynomial $F$ from Example \ref{ex:pencil}.
In that example, the lines $\overline{P_1P_2}$, $\overline{P_3P_4}$, and $\overline{P_5P_6}$ are coincident.   Consider the Grassmann-Cayley expression $(34 \wedge 12) \vee 56.$  The expression $(34 \wedge 12)$ is an extensor of step 1 representing the point of intersection, $P_7,$ of the lines $\overline{P_3P_4}$, and $\overline{P_1P_2}$.  The join of this point with the line $\overline{P_5P_6}$ is an extensor of step 3 given by the bracket $[567]$ which is zero precisely when $P_5, P_6,$ and $P_7$ fail to span $\PP^2.$   

Therefore, the vanishing of the expression encodes the condition that the three lines are coincident, and
\[
(34 \wedge 12) \vee 56 = \left([312]4 - [412]3 \right) \vee 56 = [123][456] -[124][356] = F,
\] 
which is the polynomial given by Ford. Since $F$ vanishes on $\Gamma_x$, $F \in I_x$. 
\end{ex}

% % % % % 
\section{A matroid variety from Pascal's Theorem} 
\label{sec:pascal}

Blaise Pascal rose to prominence by proving an incidence theorem involving points on a conic. Braikenridge and Maclaurin independently proved the converse. Our goal in this section is to produce nontrivial polynomials in the ideal associated to a matroid variety coming from their result.

\begin{thm}[Pascal, Braikenridge-Maclaurin]
\label{thm: pascal}
Six points, $P_1, \ldots, P_6,$ lie on a conic if and only if the points  $P_7 = \overline{P_1P_2} \cap \overline{P_4P_5}$,  $P_8 = \overline{P_2P_3} \cap \overline{P_5P_6}$,  and $P_9 = \overline{P_3P_4} \cap \overline{P_6P_1}$ are collinear, as depicted in Figure \ref{fig:Pascal}. Equivalently, the six points lie on a conic if and only if  
$(12 \wedge 45) \vee (23 \wedge 56) \vee (34 \wedge 61) =0$.  In the coordinate ring of the Grassmannian, the condition for six points to lie on a conic reduces to the vanishing of the binomial 
\begin{equation} \label{eqn:quartic}
f = [123][145][246][356]-[124][135][236][456]. 
\end{equation}
\end{thm}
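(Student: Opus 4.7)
The statement contains three equivalences, which I would prove in sequence. The first---that $P_1,\dots,P_6$ lie on a conic if and only if the Pascal points $P_7, P_8, P_9$ are collinear---is the classical theorem of Pascal together with its converse due to Braikenridge and Maclaurin, and I would simply cite these results rather than reprove them.

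For the second equivalence, I would translate the collinearity condition into the language of the Grassmann-Cayley algebra. By the geometric content of the meet operation recalled in Section~\ref{sec: GC}, each $ij \wedge kl$ is a step-1 extensor representing the intersection point of the lines $\overline{P_i P_j}$ and $\overline{P_k P_l}$ in $\PP^2$, so the three factors of the triple product are precisely $P_7$, $P_8$, and $P_9$. Three points in $\PP^2$ are collinear if and only if their join is the zero step-3 extensor (equivalently, if and only if the associated bracket vanishes), which gives exactly the vanishing of $(12 \wedge 45)\vee(23\wedge 56)\vee(34\wedge 61)$.

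The third equivalence is the main computational step. Applying the meet formula from Section~\ref{sec: GC} to each factor, I would obtain
\begin{align*}
12 \wedge 45 &= [145]\cdot 2 - [245]\cdot 1, \\
23 \wedge 56 &= [256]\cdot 3 - [356]\cdot 2, \\
34 \wedge 61 &= [136]\cdot 4 - [146]\cdot 3,
\end{align*}
and then distribute the triple join over these three binomials. Of the eight resulting terms, four vanish immediately because a joined vector is repeated (for instance $2 \vee 2 = 0$), leaving a four-term polynomial whose summands are each products of four brackets.

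The main obstacle will be collapsing this four-term expression down to the two-term binomial $f$ using the Grassmann-Pl\"ucker relations~(\ref{GPrel}). The strategy is to choose a careful sequence of substitutions: for example, the identities $[145][234]=[134][245]-[124][345]$ and $[245][356]=[235][456]+[256][345]$ can be applied so that two pairs of the resulting bracket monomials cancel outright, leaving only two surviving monomials that reproduce $f$ up to an overall sign. Since the theorem asserts an equivalence of vanishing loci, this sign ambiguity is immaterial, and I would remark (as in Example~\ref{ex: pencilEq}) that the resulting polynomial lies in $I_x$ because it vanishes identically on $\Gamma_x$.
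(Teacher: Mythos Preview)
The paper does not supply a proof of Theorem~\ref{thm: pascal}; it is stated as a classical result attributed to Pascal and Braikenridge--Maclaurin, and the bracket binomial~\eqref{eqn:quartic} is recorded without derivation. Consequently there is nothing to compare your argument against.

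That said, your outline is exactly the standard derivation and is correct in structure: cite the classical incidence theorem, interpret the meets and the triple join via the dictionary in Section~\ref{sec: GC}, expand to four surviving degree-four bracket monomials, and collapse to two using three-term Grassmann--Pl\"ucker syzygies. One small slip: your second sample identity should read $[245][356]=[235][456]-[256][345]$ (not $+$); with the correct signs the cancellation goes through and one recovers $f$ up to a global sign, which as you note is irrelevant for the vanishing locus.
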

 
\begin{figure}[h!t]
\begin{center}
\includegraphics[scale=0.24]{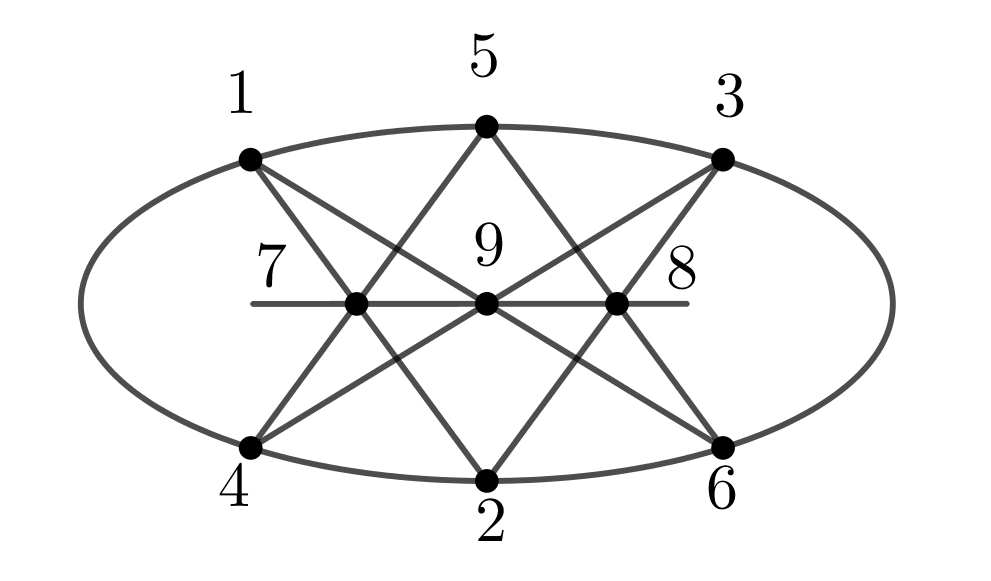}
\caption{The six points $P_1, \ldots, P_6$ lie on a conic precisely when points $P_7$, $P_8$ and $P_9$ are collinear.}
\label{fig:Pascal}
\end{center}

\end{figure} 
 
Let $x_{\rm Pascal}\in\Grass(3,9)$ be the row span of the $3\times 9$ matrix whose columns are homogeneous coordinates for points $P_1, \ldots, P_9$ arranged as in Figure \ref{fig:Pascal} so that points $P_1, \ldots, P_6$ lie on a conic and there are just 7 collinear triples among the nine points. Define $\M_{\rm Pascal} = \M_{x_{\rm Pascal}}$ to be the matroid on the nine points. 

\begin{thm} 
\label{thm: pascalQuartic} 
The defining ideal $I_{\rm Pascal}$ for the matroid variety given by $\M_{\rm Pascal}$ contains at least one quartic, three independent cubics, and three independent quadrics not in the ideal $N_{\rm Pascal}.$   
\end{thm}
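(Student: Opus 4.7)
The plan is to exhibit three quadrics, three cubics, and one quartic in $I_{\rm Pascal}$, check that each is nontrivial using Remark \ref{rmk: fnotinNx}, and then verify linear independence within each degree by evaluating at specific degenerate points of $\Var(N_{\rm Pascal})$. The quartic is the Pascal binomial $f$ of Equation (\ref{eqn:quartic}) itself, which vanishes on $\Gamma_{\rm Pascal}$ by Pascal's Theorem.

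For the quadrics, I observe that the Pascal collinearity of $P_7, P_8, P_9$ makes $\overline{P_8P_9}$ a third line through $P_7$ beyond $\overline{P_1P_2}$ and $\overline{P_4P_5}$, producing a pencil, and similarly at $P_8$ and $P_9$. Applying the Grassmann-Cayley construction of Example \ref{ex: pencilEq} to each pencil yields
\[
Q_1 = (12\wedge 45)\vee 89, \qquad Q_2 = (23\wedge 56)\vee 79, \qquad Q_3 = (34\wedge 61)\vee 78.
\]
For the cubics, I replace one of the three meets in the Pascal expression by the corresponding already-named column. For example, $C_9 = (12\wedge 45)\vee (23\wedge 56)\vee 9$ records the collinearity of $P_7$, $P_8$ and $P_9$ and expands to a cubic in the brackets; cyclically this yields $C_7 = (23\wedge 56)\vee (34\wedge 61)\vee 7$ and $C_8 = (34\wedge 61)\vee (12\wedge 45)\vee 8$. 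Each of these seven polynomials vanishes on $\Gamma_{\rm Pascal}$ by construction, hence lies in $I_{\rm Pascal}$.

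The heart of the proof is verifying that none of these polynomials lies in $N_{\rm Pascal}$. I apply Remark \ref{rmk: fnotinNx}: for $Q_k$, take $P_k$ to be the zero vector, which forces every generator of $N_{\rm Pascal}$ involving column $k$ (notably $[789]$) to vanish. Fix the other two columns in $\{7,8,9\}$ by the appropriate hexagon intersections, so the remaining non-basis brackets also vanish, and choose $P_1,\ldots,P_6$ to be generic points \emph{not} on a conic. By the converse of Pascal's Theorem the three intersection points $\overline{P_1P_2}\cap\overline{P_4P_5}$, $\overline{P_2P_3}\cap\overline{P_5P_6}$, $\overline{P_3P_4}\cap\overline{P_1P_6}$ are then non-collinear, so $Q_k(y)\neq 0$. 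The same degeneration with two of the columns in $\{7,8,9\}$ zeroed shows $C_k \notin N_{\rm Pascal}$, and zeroing all three shows $f \notin N_{\rm Pascal}$.

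Linear independence within each degree modulo $N_{\rm Pascal}$ then follows from the same evaluations: at the configuration with $P_k=0$, every bracket appearing in $Q_j$ for $j\neq k$ contains the index $k$ and so vanishes, while $Q_k(y)\neq 0$, so any combination $a_1 Q_1+a_2 Q_2+a_3 Q_3$ lying in $N_{\rm Pascal}$ must have $a_1=a_2=a_3=0$; an analogous argument with two zero columns handles the cubics. The delicate point is that on $\Gamma_{\rm Pascal}$ each concurrence underlying a $Q_k$ is already forced by the non-basis conditions, which might lead one to suspect $Q_k\in N_{\rm Pascal}$; the resolution is that $\Var(N_{\rm Pascal})$ is strictly larger than $\V_{\rm Pascal}$, and the zero-column degeneration decouples the non-basis conditions from the underlying concurrence precisely in that larger set.
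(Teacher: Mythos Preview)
Your argument is correct and follows the same strategy as the paper: the seven polynomials you write down are, up to sign and naming, exactly the paper's $f$, $g_7,g_8,g_9$, and $h_{78},h_{79},h_{89}$, and the nontriviality and independence checks via degenerate points of $\Var(N_{\rm Pascal})$ are the same mechanism the paper uses. Your choice of test configurations is in fact a bit cleaner than the paper's---you uniformly zero the unused auxiliary columns and set the remaining ones to the natural intersection points of a generic non-conic hexagon, whereas the paper builds more ad~hoc witnesses (e.g.\ setting column $1$ equal to column $7$ on the line $\overline{P_4P_5}$). One notational slip to fix: your quadrics are indexed $Q_1,Q_2,Q_3$, but the phrase ``for $Q_k$, take $P_k$ to be the zero vector \ldots\ (notably $[789]$)'' only makes sense if $k\in\{7,8,9\}$; make the intended bijection $Q_1\leftrightarrow 7$, $Q_2\leftrightarrow 8$, $Q_3\leftrightarrow 9$ explicit.
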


\begin{proof}
From Figure \ref{fig:Pascal} we see that 
\[
	N_{\rm Pascal} = \langle [127], [238], [349],  [457], [568], [169], [789]\rangle.
\]
\noindent {\bf The quartic}: Let $f$ be the expression in Equation \eqref{eqn:quartic}. 
As explained in Remark \ref{rmk: fnotinNx}, in order to show that $f\notin N_{\rm Pascal}$, it suffices to find $y \in \Var(N_{\rm Pascal})$ with $f(y) \neq 0$. Let $y$ be the point in $\Grass(3,9)$ associated to the $3 \times 9$ matrix $A_y$ whose first six columns correspond to six points in $\PP^2$ that do not lie on a conic and whose last 3 columns are zero.   
Each of the seven brackets in $N_{\rm Pascal}$ vanishes on $y$ because each bracket involves at least one of the points $7$, $8$, or $9$.  
However, Pascal's Theorem does not hold for the first six points, so $f(y) \neq 0$, as required.  
\smallskip

\noindent {\bf Three cubics}: 
Replacing the respective parenthesized expressions in the Grassmann-Cayley expression for $f$ by $7$, $8$, or $9$ produces a cubic.  For example, we have 
$$
\begin{array}{lll}
g_7 & = & 7 \vee (23 \wedge 56) \vee (34 \wedge 61) \\
&  =  & [256][361][734]-[356][361][724]+[356][[461][723] 
\end{array} 
$$
in $I_{\rm Pascal}$.
We show the cubic $g_7$ is nontrivial, 
and by similar constructions it follows that the other two cubics are also nontrivial.  As before, it is enough to find $z\in\Var(N_{\rm Pascal})$ with $g_7(z)\neq 0$. Consider the point $z\in\Grass(3,9)$ with representative matrix $A_z$ given as follows: columns $2,\dots,6$ are coordinates for points in general position, columns $1$ and $7$ are equal and on the line joining $4$ and $5$, and columns $8$ and $9$ equal zero.  The nonzero points are depicted in the left side of Figure \ref{fig: cubic}.  
\begin{figure}[h!t]
  \includegraphics[scale=0.15]{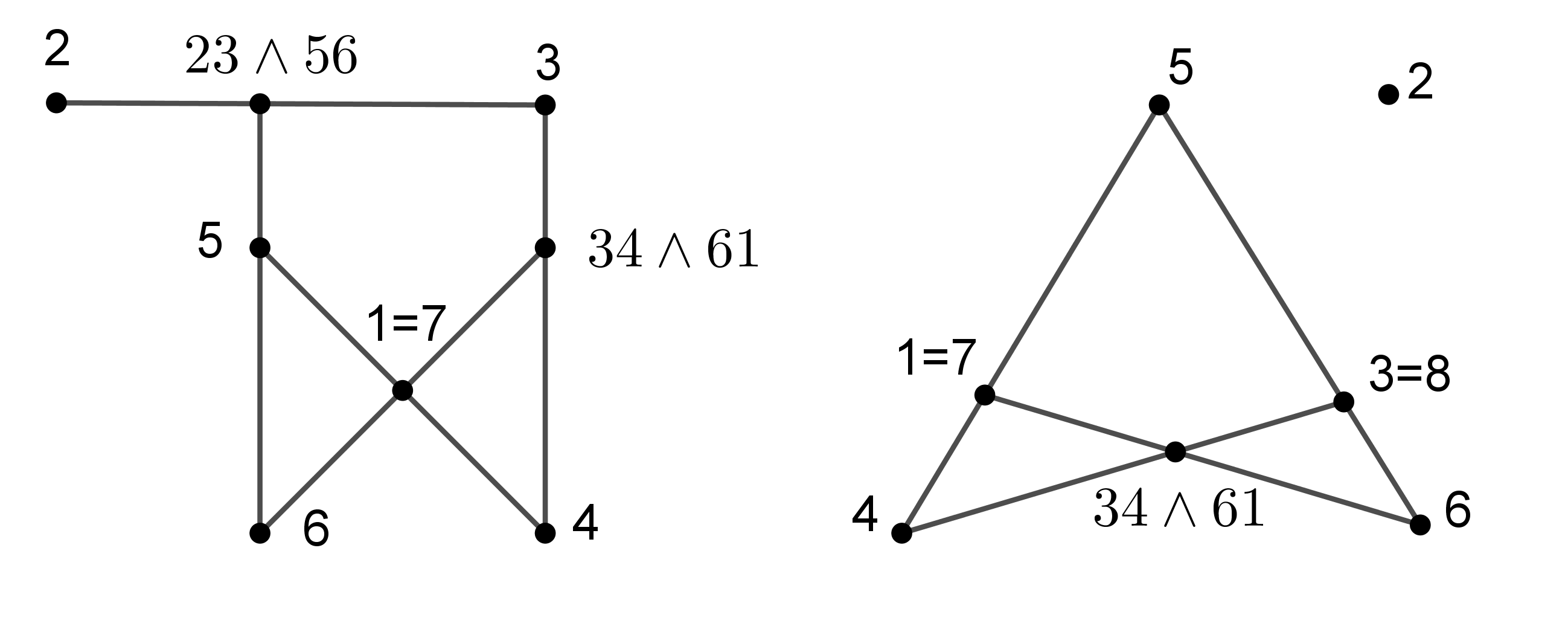}
\caption{Two configurations used to show cubics (left) and quadratics (right) are nontrivial.}
\label{fig: cubic}
\end{figure}

The brackets in $N_{\rm Pascal}$ vanish on $z$ so $z \in \Var(N_{\rm Pascal})$. The Cayley factorization of $g_7$ shows that $g_7$ vanishes precisely when $7$ is collinear with $23 \wedge 56$ and $34 \wedge 61,$ which we can see fails in the left side of Figure \ref{fig: cubic}.  It follows that $g_7(z) \neq 0$ and $g_7 \in I_{\rm Pascal} \setminus N_{\rm Pascal}$ is a nontrivial cubic. 

We also claim that the three cubics are independent.  Let 
$$
g_8 = (12\wedge 45) \vee 8\vee (34 \wedge 61) \text{ and } g_9 = (12\wedge 45) \vee (23 \wedge 56) \vee 9. 
$$
If $g_7$ were a $\C[\Grass(3,9)]$-combination of $g_8$ and $g_9$, then we would have polynomials $\varphi,\psi\in\C[\Grass(3,9)]$ such that 
$
g_7 = \varphi g_8 + \psi g_9
$ 
at all points in $\Grass(3,9)$. Since points $8$ and $9$ are zero, $g_8(z) = g_9(z) = 0$ and $(\varphi g_8 + \psi g_9)(z) = 0$. However, $g_7(z) \neq 0$, contradicting our dependence assumption. Similar arguments prove that $g_8$ and $g_9$ are independent of each other.
\smallskip

\noindent {\bf Three quadrics}: To construct a nontrivial quadric in $I_{\rm Pascal},$ replace two meets in $f$ by points to produce
\begin{equation}
\label{eqn: conic}
h_{78} = 7 \vee 8\vee (34 \wedge 61) = [748][361]-[461][738].
\end{equation}
We see that we obtain three quadrics this way, as we can replace any two of the three meets by points.  We  show that $h_{78}$ is nontrivial by finding $w\in\Grass(3,9)$ in $\Var(N_{\rm Pascal})$ with $h_{78}(w) \neq 0$. Let $w \in \Grass(3,9)$ denote the point with representative matrix consisting of columns $2$, $4$, $5$, and $6$ in general position, columns $1$ and $7$ equal and on the line joining $4$ and $5$, columns $3$ and $8$ equal and on the line joining $5$ and $6$, and $9$ equal to the zero vector, as depicted in the right side of Figure \ref{fig: cubic}. Using the figure it is easy to see that the points (together with the zero vector 9) satisfy the brackets generating $N_{\rm Pascal}$, so $w \in \Var(N_{\rm Pascal})$.  Since the bracket polynomial $h_{78}$ vanishes exactly when $7, 8$ and $34 \wedge 61$ are collinear we see that $h_{78}(w)\neq 0.$

Now let $h_{79} = 7 \vee (23 \wedge 56) \vee 9$ and $h_{89} = (12 \wedge 45) \vee 8 \vee 9.$  Since point $9$ is the zero vector, we can see from the Cayley factorization that $h_{79}(w) = h_{89}(w) = 0,$ which implies that $h_{78}$ is independent of the other two quartics.  By symmetry, we see that the three polynomials are independent.
\end{proof}

\begin{rmk} Using Macaulay2 \cite{M2} we checked that none of the cubics are in the ideal generated by the quadratics and $N_{\rm Pascal}$. We also checked that the quartic is not in the ideal generated by the quadratics, the cubics and $N_{\rm Pascal}$. This shows that the seven polynomials are not just degree-wise independent but they are also independent over the coordinate ring of the Grassmannian. 

\end{rmk}

% % % % % 
\section{Generalizations}
\label{sec: generalizations}

% % %
\subsection{More Points}
\label{sec: morePoints}

We generalize Theorem \ref{thm: pascalQuartic} to matroids of $n>6$ points on a conic, deriving many quartic, cubic and quadric equations that vanish on the associated matroid variety.  

\begin{thm}
\label{thm: morePoints}
	Let $\mathcal P = \{P_1, \ldots, P_n\}$ be distinct points lying on a nondegenerate conic. 
	For each $i \in \{6,\ldots, n\}$  
	define the sets 
\[
\mathcal Q_i = \left\{Q_{i1} = \overline{P_1P_2}\cap\overline{P_4P_5}, \ Q_{i2} = \overline{P_2P_3}\cap\overline{P_5P_i}, \ Q_{i3} = \overline{P_3P_4}\cap\overline{P_iP_1}\right\},
\]
and let $\mathcal Q = 
\bigcup_{i = 6}^n \mathcal Q_i$, and $q = |\mathcal Q|.$  
Define $x_n$ to be the point of $\Grass(3,n+q)$ associated to a fixed $3\times (n+q)$ matrix whose columns are the homogeneous coordinates for the points in $\mathcal P\cup \mathcal Q.$  Then the ideal $I_{x_n}$ contains independent quartic polynomials that are not in $N_{x_n}.$
\end{thm}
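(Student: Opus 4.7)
The plan is to apply Pascal's Theorem to each six-element subset $\{P_1, P_2, P_3, P_4, P_5, P_i\}$ as $i$ ranges over $\{6, \ldots, n\}$, producing $n - 5$ quartics. For each such $i$, let
$$
f_i = [123][145][24i][35i] - [124][135][23i][45i],
$$
the binomial obtained from Equation~(\ref{eqn:quartic}) by substituting the index $i$ for $6$. Since the six points indexed by $\{1,2,3,4,5,i\}$ lie on the common nondegenerate conic given in the hypothesis, Theorem~\ref{thm: pascal} gives $f_i(x_n) = 0$; the same vanishing holds at every point of $\Gamma_{x_n}$ because matroid-equivalent configurations preserve the conic condition on any six of the original points, so $f_i \in I_{x_n}$.

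To show $f_i \notin N_{x_n}$, I would follow the blueprint of Remark~\ref{rmk: fnotinNx} and exhibit $y_i \in \Var(N_{x_n})$ with $f_i(y_i) \neq 0$. Take $y_i \in \Grass(3, n+q)$ represented by a $3 \times (n+q)$ matrix whose columns indexed by $\{1,2,3,4,5,i\}$ are homogeneous coordinates of six points in $\PP^2$ in linearly general position and not lying on any common conic, while every remaining column (the $P_j$ for $j \in \{6, \ldots, n\} \setminus \{i\}$ and all of the $Q_{jk}$) is set to zero. Every generator of $N_{x_n}$ then vanishes on $y_i$: the collinearity generators $\{Q_{j1}, Q_{j2}, Q_{j3}\}$ and each incidence generator $\{P_a, P_b, Q_{jk}\}$ involves at least one zeroed column, and no three of $\{P_1, \ldots, P_5, P_i\}$ form a non-basis since they were chosen in general position. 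The converse direction of Pascal's Theorem applied to these six points, chosen not to lie on a conic, yields $f_i(y_i) \neq 0$.

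For the independence of $\{f_6, \ldots, f_n\}$ modulo $N_{x_n}$, observe that every bracket appearing in $f_j$ contains the index $j$, so when $j \in \{6, \ldots, n\} \setminus \{i\}$ the column $P_j$ is zero in $y_i$ and thus $f_j(y_i) = 0$. If one had a relation $f_i = g + \sum_{j \neq i} \varphi_j f_j$ with $g \in N_{x_n}$ and $\varphi_j \in \C[\Grass(3, n+q)]$, evaluating at $y_i \in \Var(N_{x_n})$ would give $f_i(y_i) = 0$, contradicting the previous paragraph. Hence the $n - 5$ quartics are independent in $I_{x_n}/N_{x_n}$.

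The step requiring the most care will be verifying that every generator of $N_{x_n}$ vanishes on each witness $y_i$, i.e., correctly enumerating the non-bases of $\M_{x_n}$. One must account for all Pascal collinearities $\{Q_{j1}, Q_{j2}, Q_{j3}\}$, all incidence relations forcing each $Q_{jk}$ to lie on its two defining lines, and confirm that no triple among $\{P_1, \ldots, P_n\}$ is itself declared a non-basis (which follows because they lie on a nondegenerate conic). A mild subtlety is that $Q_{i1}$ as defined does not actually depend on $i$, so the sets $\mathcal Q_i$ overlap and $q$ is smaller than $3(n-5)$; this affects the bookkeeping but not the substance of the witness argument.
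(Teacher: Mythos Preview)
Your proposal is correct and follows essentially the same strategy as the paper: produce one Pascal quartic $f_i$ per index $i\in\{6,\dots,n\}$, then exhibit witness points $y_i\in\Var(N_{x_n})$ on which $f_i$ is nonzero but every other $f_j$ vanishes. The only difference is cosmetic: the paper's witness keeps all $n$ columns $P_1,\dots,P_n$ nonzero (placing $\mathcal P\setminus\{P_i\}$ on a conic that misses $P_i$) and zeros only the $\mathcal Q$-columns, whereas you zero everything except the six columns $\{1,2,3,4,5,i\}$; your choice makes the check $y_i\in\Var(N_{x_n})$ marginally cleaner, and both yield the same diagonal evaluation pattern for independence.
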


\begin{proof}
Applying Theorem \ref{thm: pascal} to each set $\{P_1, \ldots, P_5, P_i\} \cup \mathcal Q_i$ gives a quartic bracket polynomial $f_i \in I_{x_n}$ only involving $P_1, \ldots, P_5$ and $P_i$, guaranteeing that these six points are on a conic.  Since the first five points determine a unique nondegenerate conic, all of the points $P_1, \ldots, P_n$ must be on the same conic.  If any three points in $\mathcal P$ are collinear then the line must be a component of the conic, contradicting our nondegeneracy assumption. So no three points in $\mathcal P$ are collinear and 
every dependent triple of elements in $\mathcal P \cup \mathcal Q$ must contain an element of $\mathcal Q.$ 

To see that none of the $f_i$ are in $N_{x_n}$ define a $3 \times (n+q)$ matrix $A_{y_i}$ whose first $n$ columns are homogeneous coordinates for elements of $\PP^2$ with the property that $\mathcal P\setminus P_i$ lie on a conic not containing $P_i$, %no six lie on a conic
and whose last $q$ columns are zero. Note that each of the nonbases in $\M_{x_n}$ involves an element of $\mathcal Q$, so $y_i \in \Var(N_{x_n})$. However, $f_i(y_i) \neq 0$ by construction. It follows from Remark \ref{rmk: fnotinNx} that each $f_i$ is in $I_{x_n} \setminus N_{x_n}$. Futhermore, the polynomials $f_i$ are independent, since $f_j(y_i) \neq 0$ precisely when $i \neq j$.  
\end{proof}

Using the same construction as in Theorem \ref{thm: pascalQuartic}, for each $f_i$ we can construct three independent cubics and three independent quadrics in $I_{x_n}\setminus N_{x_n}$.

% % %
\subsection{Higher Dimension}

Caminata and Schaffler \cite{caminataSchaffler} recently proved a result that generalizes both Pascal's Theorem and the Braikenridge-Maclaurin Theorem to higher dimensions.  We use their result on rational normal curves to construct an infinite family of matroid varieties with nontrivial polynomials that can be constructed via the Grassmann-Cayley algebra, generalizing Theorem \ref{thm: pascalQuartic} to all dimensions.

We recall notation for a rational normal curve of degree $d$ in $\PP^d.$ 
Let $F_0, \ldots, F_d$ be a basis for the homogeneous forms of degree $d$ on $\PP^1.$  The image of the map $\nu_d: \PP^1 \to \PP^d$ given by $\nu_d([x_0:x_1]) = [F_0: \cdots: F_d]$ is a \emph{rational normal curve} of degree $d.$  Every subset of the rational normal curve $\nu_r(\PP^1) \subset \PP^r$  consisting of $r+1$ or fewer points is linearly independent.  As a consequence, the matroid associated to any finite subset of $r+1$ or more points in $\nu_r(\PP^1)$ is the uniform matroid of rank $r+1$ (see Example \ref{ex:matrixMatroid}). It is possible to construct a family of rational normal curves whose limiting position decomposes as a union of rational normal curves in proper subspaces of our ambient projective space. A \emph{quasi-Veronese variety} is a union of curves that are rational normal curves in their spans.

\begin{thm}[Caminata-Schaffler \cite{caminataSchaffler}*{Theorem 5.1}]
\label{thm: caminataSchaffler}
Suppose that $d \geq 2$ and $\mathcal P = \{P_1, \ldots, P_{d+4}\} \subset\PP^d$ do not lie on a hyperplane.  For each $\lambda\in\Lambda(\Omega_{d+4},6)$ and complementary set $\Omega_{d+4}\setminus\lambda = \{j_1 < \cdots < j_{d-2}\}$, define $H_{\lambda} = j_1 \cdots j_{d-2}.$   Then $\mathcal P$ lies on a quasi-Veronese curve if and only if for each $\lambda = \{i_1 < \cdots < i_6 \} \in\Lambda(\Omega_{d+4},6)$
\begin{equation}
\label{eq: caminataSchaffler}
(i_1i_2\wedge i_4i_5H_{\lambda} )\vee   (i_2i_3 \wedge i_5i_6H_{\lambda}) \vee (i_3i_4 \wedge i_6i_1H_{\lambda}) \vee H_{\lambda}
=0.
\end{equation} 
In particular, these Grassmann-Cayley conditions are satisfied for sets of points lying on a rational normal curve.  
\end{thm}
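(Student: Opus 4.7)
The plan is to translate the Grassmann-Cayley identity (\ref{eq: caminataSchaffler}) into a planar statement via a suitable projection, and then use Theorem \ref{thm: pascal} in the projected $\PP^2$ to establish both implications.

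First, I would geometrically interpret (\ref{eq: caminataSchaffler}). The extensor $H_\lambda$ of step $d-2$ represents the linear subspace $\Pi_\lambda \subset \PP^d$ of projective dimension $d-3$ spanned by the complementary points $P_{j_1},\ldots,P_{j_{d-2}}$. Then $i_4 i_5 H_\lambda$ is the hyperplane containing $\Pi_\lambda$ together with $P_{i_4}$ and $P_{i_5}$, and the meet $i_1 i_2 \wedge i_4 i_5 H_\lambda$ is a point of $\PP^d$ whose image under the projection $\pi_\lambda\colon \PP^d \dashrightarrow \PP^2$ from $\Pi_\lambda$ is exactly the intersection of $\pi_\lambda(\overline{P_{i_1} P_{i_2}})$ and $\pi_\lambda(\overline{P_{i_4} P_{i_5}})$. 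The three meets appearing in (\ref{eq: caminataSchaffler}) are thus lifts of the three Pascal points of the projected sextuple, and joining with $H_\lambda$ produces a top-step extensor that vanishes exactly when those three points lie in a common hyperplane through $\Pi_\lambda$---equivalently, when their $\pi_\lambda$-images are collinear in $\PP^2$. Hence (\ref{eq: caminataSchaffler}) is equivalent to the planar Pascal condition on the six points $\pi_\lambda(P_{i_1}),\ldots,\pi_\lambda(P_{i_6})$.

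For the forward direction, if $\mathcal{P}$ lies on a rational normal curve $C$ of degree $d$, then projection of $C$ from any $d-2$ of its own points yields an irreducible plane conic, so $\pi_\lambda(P_{i_1}),\ldots,\pi_\lambda(P_{i_6})$ lie on this conic and the classical direction of Theorem \ref{thm: pascal} forces (\ref{eq: caminataSchaffler}) to vanish. The quasi-Veronese case follows by a degeneration argument: specialize a one-parameter family of smooth rational normal curves to the reducible configuration and take limits of the bracket identity, or partition the sextuple according to the components on which the points lie and verify the (possibly degenerate) planar Pascal identity directly on each piece.

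The main obstacle is the converse. Assuming (\ref{eq: caminataSchaffler}) holds for every $\lambda$, I would proceed as follows. Fix $\lambda = \{1,\ldots,6\}$; the vanishing combined with the Braikenridge-Maclaurin half of Theorem \ref{thm: pascal} produces a plane conic $Q$ through $\pi_\lambda(P_1),\ldots,\pi_\lambda(P_6)$. When $Q$ is smooth, classical results on rational normal curves produce a rational normal curve $C \subset \PP^d$ of degree $d$ passing through $P_{j_1},\ldots,P_{j_{d-2}}$ and $P_1,\ldots,P_6$ whose projection from $\Pi_\lambda$ is $Q$; since any $d+3$ points in suitably general position lie on a unique rational normal curve of degree $d$, the remaining Grassmann-Cayley conditions obtained by varying $\lambda$ force every point onto that common curve. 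The delicate part is the degenerate case: when $Q$ splits into a pair of lines, the lifted curve should decompose into rational normal curves on complementary subspaces, yielding a quasi-Veronese curve; when $\Pi_\lambda$ itself has smaller than expected dimension because of special position among the complementary points, the picture degenerates further. Carefully stratifying these degenerations, checking compatibility between the curves produced by different $\lambda$, and verifying that the final union of pieces is indeed a quasi-Veronese variety in the sense of the paper is what I expect to be the hard part. The hypothesis that $\mathcal{P}$ does not lie on a hyperplane is precisely what prevents the configuration from collapsing into a proper subspace and makes the projection analysis nondegenerate.
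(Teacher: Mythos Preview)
The paper does not prove this theorem at all: it is quoted from Caminata and Schaffler \cite{caminataSchaffler}*{Theorem 5.1} and used as a black box, so there is no ``paper's own proof'' to compare against. Your sketch is therefore not wrong by comparison with the paper---there is simply nothing in the paper to compare it to.

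That said, as a standalone argument your proposal is incomplete in exactly the places you flag. The forward direction for smooth rational normal curves is fine, and the interpretation of (\ref{eq: caminataSchaffler}) as the Pascal condition on the projection $\pi_\lambda$ is the right geometric picture. But the converse is only a plan, not a proof: you have not shown that the curves produced by different choices of $\lambda$ agree, nor handled the cases where $\Pi_\lambda$ fails to be a $(d-3)$-plane or where the projected conic is reducible. These degenerate strata are precisely where the notion of \emph{quasi-Veronese} (as opposed to rational normal) curve enters, and stitching the pieces together is the substance of Caminata and Schaffler's argument. If you want a complete proof you should consult their paper directly; for the purposes of the present paper, the theorem is simply cited.
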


Let $P = \{P_1, \ldots, P_{d+4}\} \subset\PP^d$ lie on a rational normal curve.  Using the notation of Theorem \ref{thm: caminataSchaffler}, consider the set 
\begin{equation}
\label{eq: Q}
\mathcal{Q} = \bigcup_{\stackrel{\lambda=\{i_1,\dots,i_6\}}{\in\Lambda(\Omega_{d+4},6)}} 
\left\{ i_1i_2\wedge i_4i_5H_{\lambda}, \   i_2i_3\wedge i_5i_6H_{\lambda}, \  i_3i_4\wedge i_6i_1H_{\lambda}\right\}.
\end{equation}
Each element of $\mathcal Q$ is a point obtained by intersecting a line and a hyperplane.  Some points are repeated; for example, when $r=3$, choosing the 6-element sets $\{1,2,3,4,5,7\}$ and $\{1,2,3,4,6,7\}$ and using $i_1i_2 \wedge i_4i_5H_\lambda$ yields $12 \wedge 456$ and $12 \wedge 465,$ respectively, and these two points are the same.

\begin{thm}
\label{thm:pascalConfigGen}
Let $P = \{P_1, \ldots, P_{d+4}\} \subset\PP^d$ lie on a rational normal curve. Let $\mathcal Q$ denote the points given in Equation \eqref{eq: Q} and let $n = d+4+|\mathcal Q|$.   Define $A_{\rm CS}$ to be the matrix whose columns are the points in $\mathcal P$ followed by the points in $\mathcal Q$, where $x_{\rm CS} \in \Grass(d+1,n)$ corresponds to the subspace spanned by the rows of $A_{\rm CS}.$ Then there are nontrivial quartics in $I_{\rm CS}$.  
\end{thm}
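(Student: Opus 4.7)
The strategy is to mirror the quartic portion of the proof of Theorem \ref{thm: pascalQuartic}, using Theorem \ref{thm: caminataSchaffler} in place of Pascal's Theorem. For each $\lambda = \{i_1 < \cdots < i_6\} \in \Lambda(\Omega_{d+4},6)$, define $f_\lambda \in \C[\Grass(d+1,n)]$ to be the bracket polynomial obtained by expanding the Grassmann-Cayley expression on the left-hand side of Equation \eqref{eq: caminataSchaffler}. Since the points $P_1,\dots,P_{d+4}$ lie on a rational normal curve, Theorem \ref{thm: caminataSchaffler} forces this expression to vanish on $\Gamma_{x_{\rm CS}}$, so $f_\lambda \in I_{\rm CS}$.

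The first step is a step count showing $f_\lambda$ is genuinely quartic. Working in $\GC(\C^{d+1})$, each inner meet $i_ai_b \wedge i_ci_dH_\lambda$ pairs a step-$2$ extensor with a step-$d$ extensor, and the meet formula expands it to the step-$1$ object $[i_ai_ci_dH_\lambda]\,i_b - [i_bi_ci_dH_\lambda]\,i_a$, a linear combination of points with single-bracket coefficients. Joining three such step-$1$ meets with $H_\lambda$ (step $d-2$) produces an extensor of step $d+1$, i.e.\ a single bracket when evaluated on a basis; distributing the triple join over the meet expansions collects a sum of products of three inner-meet brackets and one outer-join bracket, which is a bracket polynomial of degree four.

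For nontriviality, construct a test point $y \in \Grass(d+1,n)$ by taking the first $d+4$ columns of $A_y$ to be homogeneous coordinates of $d+4$ points in $\PP^d$ in linearly general position that do not lie on any quasi-Veronese curve, and by setting the remaining $|\mathcal Q|$ columns to zero. Generic configurations satisfy both conditions since lying on a quasi-Veronese curve is a proper Zariski-closed condition on $(\PP^d)^{d+4}$. The key combinatorial observation is that every nonbasis of $\M_{x_{\rm CS}}$ involves at least one index from the $\mathcal Q$-block: since the original $d+4$ points of $\mathcal P$ lie on a rational normal curve, every $(d+1)$-subset of $\{1,\dots,d+4\}$ is independent (a Vandermonde computation), so no nonbasis is contained in $\mathcal P$. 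Consequently every nonbasis bracket of $\M_{x_{\rm CS}}$ involves a zero column of $A_y$ and vanishes there, so $y \in \Var(N_{\rm CS})$. On the other hand, since the first $d+4$ columns do not lie on any quasi-Veronese curve, Theorem \ref{thm: caminataSchaffler} guarantees $f_\lambda(y) \neq 0$ for at least one $\lambda$, and Remark \ref{rmk: fnotinNx} then yields $f_\lambda \in I_{\rm CS}\setminus N_{\rm CS}$.

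The main obstacle is the combinatorial bookkeeping of Step 3 — confirming rigorously that every nonbasis of $\M_{x_{\rm CS}}$ really intersects the $\mathcal Q$-block, so that the test configuration is actually a point of $\Var(N_{\rm CS})$, together with the genericity claim that $d+4$ points in general linear position avoid all quasi-Veronese curves. A secondary issue, if one wants independent quartics (matching the plural in the statement and paralleling Theorem \ref{thm: morePoints}), is to isolate each $f_\lambda$ in turn by crafting test points where exactly one of the Caminata-Schaffler expressions fails to vanish; this can be arranged by keeping most of the $P_i$ on a rational normal curve and perturbing a single point off of it, but the verification requires the same genericity analysis.
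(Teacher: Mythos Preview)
Your proposal is correct and follows essentially the same approach as the paper's own proof: both arguments observe that every $(d+1)$-subset of $\mathcal P$ is independent (so every nonbasis bracket involves a $\mathcal Q$-index), build a test point $y$ whose $\mathcal P$-columns avoid all quasi-Veronese curves and whose $\mathcal Q$-columns are zero, and then invoke Theorem~\ref{thm: caminataSchaffler} together with Remark~\ref{rmk: fnotinNx}. Your write-up is in fact more thorough than the paper's in two places---the step count verifying that the Grassmann--Cayley expression really expands to a degree-four bracket polynomial, and the remark that avoiding quasi-Veronese curves is a generic condition---both of which the paper leaves implicit; your closing discussion of independence among the $f_\lambda$ goes beyond what the paper proves (the paper, like you, only establishes that at least one quartic is nontrivial, despite the plural in the statement).
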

\begin{proof}
Since the points in $\mathcal P$ lie on a rational normal curve, any subset of $d+1$ of them are linearly independent.  Hence, if a subset of $\mathcal P \cup \mathcal Q$ of size $d+1$ fails to be a basis, it must involve at least one point of $\mathcal Q$.  Hence, each of the brackets in $N_{\rm CS}$ must involve a point in $\mathcal Q.$

To show that $N_{\rm CS} \neq I_{\rm CS},$ choose $d+4$ points in $\PP^d$ that do not lie on a quasi-Veronese  curve, and define a $(d+1) \times n$ matrix $A_y$ whose first $d+4$ columns are homogeneous coordinates for the points in $\PP^d$ and whose remaining columns are zero.  Let $y$ be the point in $\Grass(d+1,n)$ corresponding to this matrix.  

Since each bracket in $N_{\rm CS}$ contains an index from $\mathcal Q$, and all of these columns of $A_y$ are zero, $y \in \Var(N_{\rm CS}).$  However, since the first $d+4$ columns of $A_y$ do not lie on a rational normal curve, the quartics in Equation \eqref{eq: caminataSchaffler} of Theorem \ref{thm: caminataSchaffler} are not satisfied. By Remark \ref{rmk: fnotinNx}, these quartics are in $I_{\rm CS}$ but not $N_{\rm CS}$. 
\end{proof}

\begin{ex}[Twisted cubic]
Theorem \ref{thm: caminataSchaffler} reduces to the Braikenridge-Maclaurin Theorem in the $d=2$ case.  Here, we work through the $d=3$ (twisted cubic) case.  The set $\mathcal P$ consists of seven points, and Theorem \ref{thm: caminataSchaffler} yields seven equations
\begin{equation}
 \label{eqn: 7eqns}
 \begin{aligned}
 (\underbrace{12\wedge 457}_{8})\vee (\underbrace{23\wedge 567}_{9}) \vee (\underbrace{34\wedge 617}_{10})\vee 7 &= 0 \\% 
 (\underbrace{12\wedge 456}_{11})\vee (23\wedge 576) \vee (34\wedge 716)\vee 6 &= 0 \\ %\\
 (12\wedge 465)\vee (23\wedge 675) \vee (\underbrace{34\wedge 715}_{12})\vee 5 &= 0 \\ %\\
 (12\wedge 564)\vee \underbrace{(23\wedge 674}_{13}) \vee (\underbrace{35\wedge 714}_{14})\vee 4 &= 0 \\ %\\
 (\underbrace{12\wedge 563}_{15})\vee (\underbrace{24\wedge 673}_{16}) \vee (\underbrace{45\wedge 713}_{17})\vee 3 &= 0 \\ %\\
 (\underbrace{13\wedge 562}_{18})\vee (\underbrace{34\wedge 672}_{19}) \vee (\underbrace{45\wedge 712}_{20})\vee 2 &= 0 \\ %\\
(\underbrace{23\wedge 561}_{21})\vee (34\wedge 671) \vee (45\wedge 721)\vee 1 &= 0, % & =\,  
\end{aligned}
\end{equation}
each of which states that four points are coplanar. Three of the four points are obtained by intersecting a line with a plane and the last point lies on the intersection of all three planes.

The set $\mathcal Q$ contains a total of 14 auxillary points, labelled $8$ through $21$ in Equation \eqref{eqn: 7eqns}.  The first equation reduces to 
\[
[1237][1457][2467][3567] - [1247][1357][2367][4567] = 0.
\]
If we fix a hyperplane $H$ not through point 7 and set $\overline{k}$ equal to the intersection of $H$ with the line joining point $k$ with point $7$, then the left hand side of this expression is a multiple of the polynomial from Theorem \ref{thm: pascal}, $[\overline{1}\,\overline{2}\,\overline{3}][\overline{1}\,\overline{4}\,\overline{5}][\overline{2}\,\overline{4}\,\overline{6}][\overline{3}\,\overline{5}\,\overline{6}] - [\overline{1}\,\overline{2}\,\overline{4}][\overline{1}\,\overline{3}\,\overline{5}][\overline{2}\,\overline{3}\,\overline{6}][\overline{4}\,\overline{5}\,\overline{6}]$, and so the equation encodes the condition that the six projected points $\overline{1}, \ldots, \overline{6}$ lie on a conic.

\end{ex}

% % %
\subsection{Matroid varieties via the Cayley-Bacharach theorems}
\label{sec: matroidVarsCB}

Pascal's Theorem is one of many manifestations of the Cayley-Bacharach Theorem.  The most well-known version of the Cayley-Bacharach Theorem is due to Chasles. It states that given the nine points of intersection of two cubics in $\PP^2$, if a third cubic passes through eight of the nine points then it necessarily passes through the ninth. Ren, Richter-Gebert, and Sturmfels \cite{rrgs} give methods to determine the ninth point given the 
other eight.  Eisenbud, Green, and Harris \cite{eisenbud+green+harris} give a historical survey of versions of the Cayley-Bacharach Theorem.  We use the following version from Traves \cite{traves}*{Theorem 6} to derive generalizations of Theorem \ref{thm: pascalQuartic}.

\begin{thm}[Cayley-Bacharach]
	\label{thm:cb}
	Suppose plane curves $\mathcal C,\mathcal D$ of degree $k$ meet in $k^2$ distinct points.  If $kd$ of those points are on an irreducible curve of degree $d$ then the remaining $k(k-d)$ points are on a curve of degree $k-d$.  Furthermore, if $\mathcal D$ factors into a product of linear forms then this curve is unique.
\end{thm}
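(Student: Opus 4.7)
The plan is to prove both the existence and uniqueness claims by Bezout-theoretic arguments. For existence I would use a pencil argument to force $\mathcal{E}$ to appear as a factor of some combination $\lambda C + \mu D$, and for uniqueness I would peel off the linear factors of $\mathcal{D}$ one at a time, again via Bezout.

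For existence, let $C, D, E$ be defining forms for $\mathcal{C}, \mathcal{D}, \mathcal{E}$ of degrees $k, k, d$. Since $\mathcal{C} \cap \mathcal{D}$ is a finite set, $\mathcal{C}$ and $\mathcal{D}$ share no irreducible component, so up to relabeling $\mathcal{E}$ is not a component of $\mathcal{D}$. Pick a point $Q$ on $\mathcal{E}$ outside the $kd$ designated intersection points, and choose scalars $(\lambda, \mu)$ so that $\lambda C(Q) + \mu D(Q) = 0$. Then $\lambda C + \mu D$ is a form of degree $k$ vanishing at $kd + 1$ points of the irreducible degree-$d$ curve $\mathcal{E}$, so Bezout forces $E \mid (\lambda C + \mu D)$. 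Writing $\lambda C + \mu D = E \cdot F$ with $\deg F = k - d$, each of the remaining $k(k-d)$ intersection points lies on $EF$ but not on $\mathcal{E}$, so it lies on the curve $\mathcal{F} = \{F = 0\}$ of degree $k - d$.

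For uniqueness when $\mathcal{D} = L_1 \cdots L_k$ is a product of linear forms, suppose $F_1, F_2$ are two linearly independent forms of degree $k-d$ both vanishing on the residual set $S$ of $k(k-d)$ points. Bezout applied to $L_i \cap \mathcal{C}$ shows each $L_i$ meets $\mathcal{C}$ in $k$ points, of which $d$ lie on $\mathcal{E}$ and $k-d$ lie in $S$. For a generic point $R$ on $L_1$, choose $(\lambda, \mu)$ so that $G = \lambda F_1 + \mu F_2$ vanishes at $R$; then $G$ has $k-d+1$ zeros on $L_1$, so Bezout forces $L_1 \mid G$. Iterating through $L_2, \ldots, L_{k-d}$ yields $G = c \cdot L_1 \cdots L_{k-d}$ for some constant $c$. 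But $G$ must also vanish at the $k-d$ points of $S \cap L_{k-d+1}$, which in generic configurations are disjoint from the pairwise intersections $L_i \cap L_{k-d+1}$, giving $G$ at least $2(k-d)$ zeros on $L_{k-d+1}$. This contradicts Bezout on $L_{k-d+1}$, so $F_1$ and $F_2$ must be proportional.

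The main obstacle will be handling degenerate configurations. If $\mathcal{E}$ coincides with an irreducible component of $\mathcal{C}$, the pencil step collapses, but the desired factorization $C = E \cdot F$ follows directly from $E \mid C$. More delicate are coincidences in the uniqueness argument: a residual point on some $L_i$ might coincide with an intersection $L_i \cap L_j$, or the lines $L_j$ might not be in ``general position'' relative to $S$. A careful writeup should either state the genericity assumptions explicitly, or replace the naive Bezout counts with intersection-multiplicity versions, so that tangencies and incidences are absorbed by the multiplicity bookkeeping rather than breaking the argument.
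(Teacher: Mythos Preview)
The paper does not prove this theorem. Theorem~\ref{thm:cb} is quoted from Traves \cite{traves}*{Theorem 6} and used as a black box in Section~\ref{sec: matroidVarsCB}; there is no proof in the paper to compare against.

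Your existence argument is the classical pencil argument (essentially Chasles') and is correct. One small gap: to guarantee that the resulting $(\lambda,\mu)$ has both coordinates nonzero, you should choose $Q\in\mathcal{E}\setminus(\mathcal{C}\cup\mathcal{D})$, not merely off the $kd$ designated points. This is possible because $\mathcal{E}$ is irreducible and, after your relabeling, $\mathcal{E}\not\subset\mathcal{D}$; if $\mathcal{E}\subset\mathcal{C}$ you have already branched into the separately handled case.

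Your uniqueness sketch is also along the right lines, and in fact the ``generic configuration'' hypothesis you worry about is largely forced by the statement. Since $\mathcal{C}\cap\mathcal{D}$ consists of $k^2$ \emph{distinct} points and $\sum_i |L_i\cap\mathcal{C}|\le k\cdot k$ by B\'ezout, each point of $\mathcal{C}\cap\mathcal{D}$ lies on exactly one $L_i$. Hence the $k-d$ points of $S\cap L_j$ are automatically disjoint from $L_1\cup\cdots\cup L_{j-1}$, and at the final step the points of $S\cap L_{k-d+1}$ are disjoint from the intersections $L_i\cap L_{k-d+1}$; the contradiction $c=0$ then follows without any extra genericity assumption. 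What does require a word is the claim that each $L_i$ meets $\mathcal{E}$ in exactly $d$ points of $\mathcal{C}\cap\mathcal{D}$: this follows from the same counting (at most $d$ per line by B\'ezout, summing to $kd$), provided $\mathcal{E}$ is not one of the $L_i$. The case $d=1$, $\mathcal{E}=L_i$ needs a separate (easy) treatment, which you should add rather than leave under ``degenerate configurations.''
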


\begin{ex}\label{ex: pascalCB}

Given three collinear points, choose another line through each of those points, forming a degenerate cubic $\mathcal C$. Then choose three more lines, one through each of the 3 points, forming a degenerate cubic $\mathcal D$. Applying Theorem \ref{thm:cb} with $k=3$ and $d=1$ to the three collinear points allows us to conclude that the six points of intersection in $\mathcal{C} \cap \mathcal{D}$ off the original line lie on a conic. This is just a restatement of the Braikenridge-Maclaurin Theorem, Theorem \ref{thm: pascal}. 
\end{ex}

A result from projective geometry need not be stated in terms of a Grassmann-Cayley expression to imply the existence of a nontrivial element of $I_x$ as we demonstrate in Theorem \ref{thm: pascalCB}.  To use Theorem \ref{thm:cb} in our construction we show that, just as there is a bracket condition that guarantees when six points lie on a conic, there are bracket conditions that characterize when points lie on higher degree curves. 

\begin{lem}
\label{lem: brackets}
	If $\binom{d+2}{2}$ points in $\PP^2$ lie on a curve of degree $d$, then the points satisfy a bracket polynomial of degree $\binom{d+2}{3}.$
\end{lem}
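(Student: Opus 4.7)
The plan is to construct the desired bracket polynomial as the determinant of the matrix representing the $d$-th Veronese embedding evaluated at the $N:=\binom{d+2}{2}$ points. Specifically, let $M=M(P_1,\dots,P_N)$ be the $N\times N$ matrix whose $(i,j)$-entry is the $j$-th degree-$d$ monomial in three variables evaluated at the homogeneous coordinates of $P_i$. Since the columns of $M$ are linearly dependent exactly when there is a nontrivial linear combination of the degree-$d$ monomials vanishing at all $P_i$, we have that $\det M=0$ precisely when the points lie on a common plane curve of degree $d$. Moreover $\det M$ is not identically zero: a generic configuration of $N$ points in $\PP^2$ does not lie on any degree-$d$ curve because the linear system of such curves has projective dimension $N-1$.

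Next, I would argue that $\det M$ is $\SL_3$-invariant as a polynomial in the point coordinates. A diagonal action of $g\in\GL_3$ on the $P_i$ replaces $M$ by $M\cdot \mathrm{Sym}^d(g)^T$, so $\det M$ is multiplied by $\det(\mathrm{Sym}^d(g))$. A weight count (each of the three diagonal entries of a diagonal $g$ appears with total exponent $d\binom{d+2}{2}/3=\binom{d+2}{3}$ when one multiplies the weights of all degree-$d$ monomials in three variables) gives
\[
\det(\mathrm{Sym}^d(g)) = \det(g)^{\binom{d+2}{3}},
\]
which equals $1$ for $g\in\SL_3$. By the First Fundamental Theorem of invariant theory for $\SL_3$ applied to $N$ vectors in $\C^3$, the polynomial $\det M$ is therefore expressible as a polynomial in the brackets $[ijk]$.

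Finally, I would count degrees to pin down the bracket degree. The determinant $\det M$ is multihomogeneous of degree $d$ in each of the $N$ points, hence has total polynomial degree $dN=d\binom{d+2}{2}=3\binom{d+2}{3}$ in the point coordinates. Each bracket $[ijk]$ is trilinear in the three points it involves, so a polynomial of bracket-degree $k$ has total point-degree $3k$; equating $3k=3\binom{d+2}{3}$ forces $k=\binom{d+2}{3}$. This produces a nonzero bracket polynomial of the claimed degree vanishing on every configuration of $\binom{d+2}{2}$ points that lies on a common degree-$d$ curve.

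The main obstacle is invoking the First Fundamental Theorem correctly and verifying the identity $\det(\mathrm{Sym}^d(g))=\det(g)^{\binom{d+2}{3}}$; the latter can be established either by the symmetric weight calculation above or by noting that $\det\circ\mathrm{Sym}^d$ is a polynomial character of $\GL_3$ hence a power of $\det$, with the exponent pinned down by tracking total degree on the diagonal torus. Once these pieces are assembled, the statement follows without further computation.
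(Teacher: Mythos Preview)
Your proposal is correct and follows essentially the same route as the paper: both form the $\binom{d+2}{2}\times\binom{d+2}{2}$ Veronese evaluation matrix, invoke the First Fundamental Theorem to express its determinant in brackets, and read off the bracket degree from the total degree $d\binom{d+2}{2}=3\binom{d+2}{3}$. You supply more detail than the paper does---the explicit computation $\det(\mathrm{Sym}^d(g))=\det(g)^{\binom{d+2}{3}}$ to justify $\SL_3$-invariance, and the observation that $\det M$ is not identically zero---where the paper simply asserts invariance under change of basis.
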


\begin{proof}
	Note that there are $\binom{d+2}{2}$ monomials of degree $d$ in three variables. Construct a square matrix $M$ of size $\binom{d+2}{2} \times \binom{d+2}{2}$ in which the $i^\text{th}$ row is obtained by evaluating each of these monomials at the $i^\text{th}$ point.
	
	Note that $\det M = 0$ precisely if there is a degree $d$ curve passing through the points.  Moreover, $\det M$ is invariant under change of basis so $\det M$ must be a bracket polynomial by the First Fundamental Theorem of Invariant Theory.  Because the degree of $\det M$ is $d\binom{d+2}{2} = 3\binom{d+2}{3}$ and each bracket contains three points, the degree of $\det M$ as a bracket polynomial is $\binom{d+2}{3}.$
\end{proof}

We extend Example \ref{ex: pascalCB} to $k >3$. First we carefully describe how to build a special arrangement of $k^2$ points. Starting with $k$ collinear points $P_1, \ldots, P_k$, we iteratively pick lines $\ell_{i}$ and $m_{i}$ through point $P_i$. Let $\mathcal{C}$ be the union of the lines $\ell_1,\ldots,\ell_k$ and $\mathcal{D}$ be the union of the lines $m_1,\ldots,m_k$. Since we are working over an infinite field, we can choose the lines $\ell_i$ and $m_i$ to avoid any given finite set of points. In particular, we can choose our lines so that the only collinearities among the $k^2$ points in $\mathcal{C} \cap \mathcal{D}$ are given by the $\ell_i$, the $m_i$, and the original line containing points $P_1,\dots,P_k$. Now $\mathcal{C} \cap \mathcal{D}$ is a set of $k^2$ points with $k$ original (collinear) points and $k^2-k$ \emph{residual points}.  Let $\M_{k^2}$ be the matroid associated to the $3 \times k^2$ matrix whose columns are homogeneous coordinates for the points in $\mathcal{C} \cap \mathcal{D}$, and $\V_{k^2}$ be the associated matroid variety. 

\begin{thm}
\label{thm: pascalCB}
There is a nontrivial polynomial of degree $\binom{k+1}{3}$ in the ideal $I_{k^2}$ of $\V_{k^2}$. 
\end{thm}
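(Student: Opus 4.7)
The approach is to exhibit an explicit bracket polynomial $f$ of degree $\binom{k+1}{3}$ in $I_{k^2}\setminus N_{k^2}$. Membership in $I_{k^2}$ comes from Cayley--Bacharach: apply Theorem \ref{thm:cb} with $d=1$ to the degree-$k$ curves $\mathcal C$ and $\mathcal D$; since the $k$ points $P_1,\dots,P_k$ lie on the line $L$, the remaining $k(k-1)$ residual points lie on a unique curve of degree $k-1$. Choose any $\binom{k+1}{2}$ of these residuals and let $f$ be the bracket polynomial of degree $\binom{k+1}{3}$ produced by Lemma \ref{lem: brackets} with $d=k-1$. The vanishing of $f$ encodes that the chosen residuals lie on some common degree-$(k-1)$ curve, and this holds on all of $\V_{k^2}$, so $f\in I_{k^2}$.

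For the nontriviality I would use Remark \ref{rmk: fnotinNx} and construct $y\in\Var(N_{k^2})$ with $f(y)\neq 0$. Choose $k$ lines $\ell_1',\dots,\ell_k'$ and $k$ lines $m_1',\dots,m_k'$ in $\PP^2$ in sufficiently general position so that the $k$ diagonal points $P_i':=\ell_i'\cap m_i'$ are \emph{not} collinear and all $k^2$ intersection points $\ell_i'\cap m_j'$ are distinct. Let $A_y$ be the $3\times k^2$ matrix with zero vectors in the columns indexed by $P_1,\dots,P_k$ and with homogeneous coordinates for the residuals $R_{ij}':=\ell_i'\cap m_j'$ ($i\neq j$) in the remaining $k(k-1)$ columns. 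Every non-basis triple of $\M_{k^2}$ either involves a zero $P_i$-column (trivially dependent) or consists of three residuals collinear on a common $\ell_i'$ or $m_j'$ (dependent as vectors in $\C^3$), so $y\in\Var(N_{k^2})$. The value $f(y)$ is the determinant of a $\binom{k+1}{2}\times\binom{k+1}{2}$ submatrix of the $k(k-1)\times\binom{k+1}{2}$ matrix $M$ that records the degree-$(k-1)$ monomials evaluated at the $R_{ij}'$; if $M$ has full column rank $\binom{k+1}{2}$, then some such submatrix is nonsingular, and defining $f$ using the corresponding residuals forces $f(y)\neq 0$.

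The main obstacle is showing that $M$ has full column rank, equivalently that no degree-$(k-1)$ curve passes through all $k(k-1)$ residuals. This is the converse direction of Theorem \ref{thm:cb} applied to the reducible degree-$k$ curves $\mathcal C'=\ell_1'\cup\cdots\cup\ell_k'$ and $\mathcal D'=m_1'\cup\cdots\cup m_k'$: a hypothetical degree-$(k-1)$ curve $\mathcal E'$ through all residuals would, by comparing the divisors $\mathcal C'\cdot\mathcal D'$ and $\mathcal C'\cdot\mathcal E'$ on $\mathcal C'$, produce a divisor of degree $k$ supported on the diagonals and cut out by a single linear form, contradicting the non-collinearity of the $P_i'$. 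For $k=3$ this reduces to the Pascal/Braikenridge--Maclaurin Theorem and recovers the quartic of Theorem \ref{thm: pascalQuartic}; for general $k$ one can argue either via this linear-equivalence computation on $\mathcal C'$ or via the self-duality form of Cayley--Bacharach, which pairs degree-$(k-1)$ conditions on the $k(k-1)$ residuals with degree-$(k-2)$ conditions on the $k$ diagonals.
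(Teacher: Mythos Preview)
Your approach is correct and genuinely different from the paper's.

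\textbf{What the paper does.} The paper fixes the polynomial $f$ first: it selects $\binom{k+1}{2}$ residuals so that some line $\ell_i$ carries exactly two of them. To build $y\in\Var(N_{k^2})$ with $f(y)\neq 0$, it keeps the original configuration but sets the $k-2$ unchosen points on $\ell_i$ (namely $P_i$ and the unchosen residuals on $\ell_i$) to zero, then slides the two chosen residuals on $\ell_i$ along their $m$-lines off $\ell_i$. All non-basis brackets still vanish (every collinear triple now contains a zero column or still lies on its original line), and a genericity argument over an infinite field lets one move the two points so the chosen $\binom{k+1}{2}$ points no longer lie on a degree-$(k-1)$ curve.

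\textbf{What you do.} You replace all $k$ diagonals by zero columns and rebuild the residuals from a fresh pair of degree-$k$ line arrangements whose diagonals are \emph{not} collinear. You then invoke a converse Cayley--Bacharach statement to conclude that no degree-$(k-1)$ curve passes through all $k(k-1)$ new residuals, whence some $\binom{k+1}{2}$-subset witnesses $f(y)\neq 0$; only then do you pick $f$.

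\textbf{Comparison.} The paper's construction is more elementary and pins down $f$ in advance; its only nontrivial step is the genericity claim that moving two points along lines destroys the degree-$(k-1)$ curve through the chosen set. Your construction trades that genericity step for a clean structural lemma (the CB converse), at the cost of having to choose $f$ a posteriori. Both are valid existence proofs.

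\textbf{One point to tighten.} Your divisor argument on $\mathcal C'$ is correct but needs two small checks you do not make explicit. First, if some $\ell_i'\subset\mathcal E'$, write $\mathcal E'=\ell_i'\cup\mathcal E''$ with $\deg\mathcal E''=k-2$; then $\mathcal E''$ must contain the $k-1$ residuals on each $\ell_j'$ ($j\neq i$), forcing $\ell_j'\subset\mathcal E''$ for all $j\neq i$ and hence $\deg\mathcal E''\geq k-1$, a contradiction. So no component of $\mathcal C'$ lies in $\mathcal E'$. Second, once that is known, $D'/E'$ is a regular section of $\mathcal O_{\mathcal C'}(1)$, and one needs $H^0(\mathcal O_{\mathcal C'}(1))=H^0(\mathcal O_{\PP^2}(1))$, which follows from $H^0(\mathcal O_{\PP^2}(1-k))=H^1(\mathcal O_{\PP^2}(1-k))=0$ for $k\geq 2$. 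With these in hand, $D'/E'$ is the restriction of a global linear form vanishing at every $P_i'$, contradicting non-collinearity. Your alternative via the Eisenbud--Green--Harris duality also works: it reduces to the fact that $k$ points in $\PP^2$ impose independent conditions on degree-$(k-2)$ forms if and only if they are not all collinear, which is classical and easy to prove by induction.
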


\begin{proof}

From Theorem \ref{thm:cb} the $k^2-k$ residual points lie on a curve of degree $k-1$. Pick $\binom{k+1}{2}$ of the residual points with the property that one of the lines $\ell_i$ in $\mathcal{C}$ contains exactly two of these points. From Lemma \ref{lem: brackets} there is a degree $\binom{k+1}{3}$ bracket polynomial $f$ that guarantees that the selected points lie on a degree $k-1$ curve. By construction $f \in I_{k^2}$. By Remark \ref{rmk: fnotinNx},   in order to show that $f$ is nontrivial, it is enough to find a configuration $y$ on which all the brackets in $N_{k^2}$ vanish and $f(y) \neq 0$. 

Let $y$ be the configuration obtained in the following way. Start with all $k^2$ points in $\mathcal{C} \cap \mathcal{D}$. Take the $k-2$ points on $\ell_i$ that are not among the chosen points on which $f$ vanishes and set those equal to the zero vector.  Now move the remaining two points on $\ell_i$ along the lines in $\mathcal D$, so that they no longer lie on $\ell_i$.  This resulting configuration guarantees the brackets in $N_{k^2}$ vanish.  However, working over an infinite field also guarantees we can move the two points so that our chosen set of $\binom{k+1}{2}$ points no longer lies on a curve of degree $k-1$.  
The total collection of $k^2$ points obtained this way will be our configuration $y$. Thus, $y$ is a configuration where all the brackets in $N_{k^2}$ vanish but $f(y) \neq 0$.  
\end{proof}

\begin{ex}
Figure \ref{fig:fourCapFour} illustrates Theorem \ref{thm: pascalCB} in the $k=4$ case. The four points on the dashed line are collinear, so the 12 residual points must lie on a cubic. By Lemma \ref{lem: brackets} we get up to $\binom{4}{2} \binom{3}{2} \binom{3}{2}=54$ nontrivial bracket polynomials of degree $10$. 
\end{ex}

\begin{figure}[h!t]
\begin{center}
\includegraphics[scale=0.2]{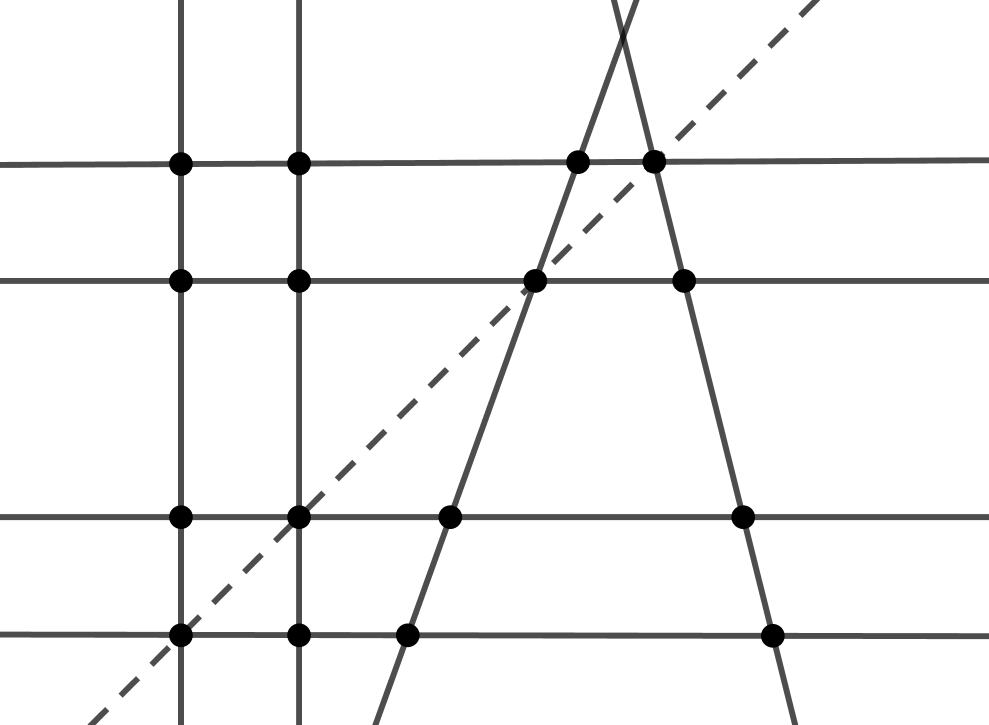}
\caption{An arrangement of points giving $\M_{4^2}$ built starting with 4 points lying on a (dashed) diagonal line. The union of the 4 horizontal lines is the curve $\mathcal{C}$ and the union of the remaining (solid) lines is the curve $\mathcal{D}$.}
\label{fig:fourCapFour}
\end{center}
\end{figure}

\noindent{\bf Acknowledgements.}
The authors would like to thank Paul Hacking for insightful conversations and Allen Knutson for his comments on a draft version of the paper. We are also grateful to the two anonymous referees, whose helpful comments  improved the paper. 

% % % % % % % % % % % % 
% \bib, bibdiv, biblist are defined by the amsrefs package.
\begin{bibdiv}
	\begin{biblist}
		
		\bib{caminataSchaffler}{article}{
			author={{Caminata}, Alessio},
			author={{Schaffler}, Luca},
			title={{A Pascal's theorem for rational normal curves}},
			date={2019Mar},
			journal={arXiv e-prints},
			pages={arXiv:1903.00460},
			eprint={1903.00460},
		}
		
		\bib{eisenbud+green+harris}{article}{
			author={Eisenbud, David},
			author={Green, Mark},
			author={Harris, Joe},
			title={Cayley-{B}acharach theorems and conjectures},
			date={1996},
			ISSN={0273-0979},
			journal={Bull. Amer. Math. Soc. (N.S.)},
			volume={33},
			number={3},
			pages={295\ndash 324},
			url={http://dx.doi.org/10.1090/S0273-0979-96-00666-0},
			review={\MR{1376653 (97a:14059)}},
		}
		
		\bib{ford}{article}{
			author={Ford, Nicolas},
			title={The expected codimension of a matroid variety},
			date={2015},
			ISSN={0925-9899},
			journal={J. Algebraic Combin.},
			volume={41},
			number={1},
			pages={29\ndash 47},
		}
		
		\bib{ggms}{article}{
			author={Gel\cprime~fand, I.~M.},
			author={Goresky, R.~M.},
			author={MacPherson, R.~D.},
			author={Serganova, V.~V.},
			title={Combinatorial geometries, convex polyhedra, and {S}chubert
				cells},
			date={1987},
			ISSN={0001-8708},
			journal={Adv. in Math.},
			volume={63},
			number={3},
			pages={301\ndash 316},
		}
		
		\bib{M2}{misc}{
			author={Grayson, Daniel~R.},
			author={Stillman, Michael~E.},
			title={Macaulay2, a software system for research in algebraic geometry},
			how={Available at \url{https://faculty.math.illinois.edu/Macaulay2/}},
		}
		
		\bib{kls}{article}{
			author={Knutson, Allen},
			author={Lam, Thomas},
			author={Speyer, David~E.},
			title={Positroid varieties: juggling and geometry},
			date={2013},
			ISSN={0010-437X},
			journal={Compos. Math.},
			volume={149},
			number={10},
			pages={1710\ndash 1752},
		}
		
		\bib{mnev}{article}{
			author={Mn\"{e}v, N.~E.},
			title={Varieties of combinatorial types of projective configurations and
				convex polyhedra},
			date={1985},
			ISSN={0002-3264},
			journal={Dokl. Akad. Nauk SSSR},
			volume={283},
			number={6},
			pages={1312\ndash 1314},
		}
		
		\bib{rrgs}{article}{
			author={Ren, Qingchun},
			author={Richter-Gebert, J\"{u}rgen},
			author={Sturmfels, Bernd},
			title={Cayley-{B}acharach formulas},
			date={2015},
			ISSN={0002-9890},
			journal={Amer. Math. Monthly},
			volume={122},
			number={9},
			pages={845\ndash 854},
		}
		
		\bib{richter-Gebert}{book}{
			author={Richter-Gebert, J\"{u}rgen},
			title={Perspectives on projective geometry},
			publisher={Springer, Heidelberg},
			date={2011},
			ISBN={978-3-642-17285-4},
			note={A guided tour through real and complex geometry},
		}
		
		\bib{st}{incollection}{
			author={Sidman, Jessica},
			author={Traves, William},
			title={Special positions of frameworks and the {G}rassmann-{C}ayley
				algebra},
			date={2019},
			booktitle={Handbook of geometric constraint systems principles},
			series={Discrete Math. Appl. (Boca Raton)},
			publisher={CRC Press, Boca Raton, FL},
			pages={85\ndash 106},
		}
		
		\bib{sturmfelsStrata}{article}{
			author={Sturmfels, Bernd},
			title={On the matroid stratification of {G}rassmann varieties,
				specialization of coordinates, and a problem of {N}. {W}hite},
			date={1989},
			ISSN={0001-8708},
			journal={Adv. Math.},
			volume={75},
			number={2},
			pages={202\ndash 211},
		}
		
		\bib{sturmfels}{book}{
			author={Sturmfels, Bernd},
			title={Algorithms in invariant theory},
			series={Texts and Monographs in Symbolic Computation},
			publisher={Springer-Verlag, Vienna},
			date={1993},
			ISBN={3-211-82445-6},
		}
		
		\bib{traves}{article}{
			author={Traves, Will},
			title={From {P}ascal's theorem to {$d$}-constructible curves},
			date={2013},
			ISSN={0002-9890},
			journal={Amer. Math. Monthly},
			volume={120},
			number={10},
			pages={901\ndash 915},
		}
		
		\bib{whiteBracketsI}{article}{
			author={White, Neil~L.},
			title={The bracket ring of a combinatorial geometry. {I}},
			date={1975},
			ISSN={0002-9947},
			journal={Trans. Amer. Math. Soc.},
			volume={202},
			pages={79\ndash 95},
		}
		
		\bib{whiteBracketsII}{article}{
			author={White, Neil~L.},
			title={The bracket ring of a combinatorial geometry. {II}. {U}nimodular
				geometries},
			date={1975},
			ISSN={0002-9947},
			journal={Trans. Amer. Math. Soc.},
			volume={214},
			pages={233\ndash 248},
		}
		
	\end{biblist}
\end{bibdiv}
%\bibliography{matroidBib}
%\bibliographystyle{plain}
\end{document}